\newcommand{\eqref}[1]{equation (\ref{#1})}
\theoremstyle{plain}
\newtheorem{remark}{Remark}
\newtheorem{lemma}{Lemma}
\theoremstyle{remark}
\begin{document}

\title[A data-driven approach to 1D inverse scattering]{A data-driven approach to solving a 1D inverse scattering problem}

\author{Tristan van Leeuwen$^{1,2}$ and Andreas Tataris$^{1}$}

\address{
$^{1}$Utrecht University, Utrecht, the Netherlands. \\
$^{2}$Centrum Wiskunde \& Informatica, Amsterdam , the Netherlands.
}
\ead{T.van.Leeuwen@cwi.nl}
\vspace{10pt}

\begin{abstract}
In this paper, we extend the ROM-based approach for inverse scattering with Neumann boundary conditions, introduced by Druskin at. al. (Inverse Problems 37, 2021), to the 1D Schr{\"o}dinger equation with impedance (Robin) boundary conditions. We also propose a novel data-assimilation (DA) inversion method based on the ROM approach, thereby avoiding the need for a Lanczos-orthogonalization (LO) step. Furthermore, we present a detailed numerical study and comparison of the accuracy and stability of the DA and LO methods. 
\end{abstract}

%
\vspace{2pc}
\noindent{\it Keywords}: inverse scattering, reduced order model, Schrödinger equation

%
%
%

\section{Introduction\label{Introduction }}

Inverse scattering appears in many applications, including medical imaging, non-destructive testing, and geophysical exploration \cite{pike2001scattering}. While acquisition setups differ, at their core all these inverse problems involve a wave-equation and require estimation of its variable coefficients from boundary data. Approaches to solving the resulting non-linear inverse problem can be classified as either \emph{direct} or \emph{indirect} methods. The direct methods originate in classical inverse scattering theory and rely on formulating a linear relation between scattering data and the medium parameters, see e.g. \cite{Ware1969}. The indirect methods formulate a non-linear data-fitting problem that can be solved iteratively \cite{Tarantola1984}.

The direct methods have recently attracted renewed attention, in particular in the geophysical community \cite{Broggini2012}. A recent development is the use of data-driven reduced-order models for solving the inverse problem \cite{Druskin2021LippmannSchwingerLanczosAF}. We summarize this procedure below.

\subsection{Approach}
The state equation is denoted as
\[
\left(A_q + k^2 I\right)u(k) = s,
\]
with $u$ denoting the state for wavenumber $k$, $s$ the source term, and $q$ the variable coefficient included in the differential operator $A_q.$ The measurements are given by $f_i = \langle s, u_i\rangle = \left\langle s, \left(A_q + k_i^2 I\right)^{-1}s\right\rangle$ for $i = 0, 1, \ldots, m-1$. The approach is to first estimate the states $u_i$ from the measurements, and subsequently estimate $q$ from these using the state equation.

The first step of estimating the states is approached via a reduced-order model which looks for a solution of the state equation in $\mathcal{U} = \mathrm{span}\left(\{u_i\}_{i=0}^{m-1}\right)$ by projecting the state equation on this subspace. This requires computing $\langle u_i, u_j\rangle$ and $\langle u_i, A_q u_j\rangle$. Remarkably, this can be done directly in terms of the measurements, without explicit reference to the states $u_i$. To approximate the states, then, we solve the projected state equation and represent the solution in a basis $\mathcal{U}^{(0)}$ of solutions $u_i^{(0)}$ for a given $q_0$. This last step is intricate and requires a Lanczos orthogonalization, see \cite{Druskin2021LippmannSchwingerLanczosAF} for more details.

The next step of retrieving $q$ from the approximated states, $\widetilde{u}_i$, can be approached in different ways. We can follow an equation error approach (see e.g. \cite{karkkainen1997equation}) and solve $q$ from 
\[
\left(A_q + k_i^2 I\right)\widetilde{u}_i = s.
\]
Alternatively, we can solve it from a Lipmann-Schwinger integral equation (see, e.g. \cite{kouri2003inverse})
\[
f_i - f_i^{(0)} = - \left\langle u_i^{(0)}, \left(A_q - A_0\right)\widetilde{u}_i\right\rangle.
\]

\subsection{Contributions and outline}
The ROM-based approach has been applied in various settings, including time domain wave propagation, see e.g. \cite{Borcea2021ROM} and frequency-domain diffusion processes, see \cite{Druskin2021LippmannSchwingerLanczosAF}. As a first step towards extending this procedure to frequency-domain wave-problems, we extend the approach to a 1D Schrödinger equation with impedance boundary conditions. It turns out that both reflection and transmission measurements are needed to compute the ROM matrices from the data. Furthermore, we propose an alternative approach to the Lanczos-based state estimation approach described by \cite{Druskin2021LippmannSchwingerLanczosAF}. To study the accuracy and stability properties of the resulting methods, we present numerical experiments.

The paper is organized as follows. First, we review the forward problem and present the relations between the boundary data and required ROM matrices. Then, we discuss the two-step approach to solve the inverse problem; state estimation and subsequent estimation of the scattering potential from the state. We then present numerical experiments to illustrate the accuracy and stability of both methods on noisy data. We conclude the paper with a brief summary of the main findings and discussion on further work.

\section{The forward problem\label{The forward problem}}
Consider a Schrödinger equation
\begin{equation}
\label{eq:strong_formulation}
-u''(x;k) + q(x)u(x;k) - k^2u(x;k) = 0, \quad x \in (0,1)
\end{equation}
with boundary conditions
\begin{equation}
u'(0;k) + \imath k u(0;k) = 2\imath k, u'(1;k) - \imath k u(1;k) = 0,
\end{equation}
which corresponds to an incoming plane wave from $-\infty$. The scattering potential is assumed to have compact support in $(0,1)$. The measurements are given by 
\begin{equation}
\label{eq:boundary_data}
f(k) = u(0;k), \quad g(k) = u(1;k).
\end{equation}
Well-posedness of this forward problem has been well-established (at least when $q$ is continuous),
since the boundary value problem can be transformed to the Lippmann-Schwinger integral. Then it is sufficient to study just the integral equation see e.g. \cite{kresslinear}.

\subsection{A reduced-order model}
The point of departure for the ROM-based approach is the weak formulation of \eqref{eq:strong_formulation}
\begin{equation}
\langle u', \phi'\rangle + \langle qu,\phi\rangle - k^2 \langle u, \phi \rangle -\imath k \left(f(k)\overline{\phi(0)} + g(k)\overline{\phi(1)}\right) = -2\imath k \overline{\phi(0)},
\end{equation}
where $\langle \cdot, \cdot\rangle$ denotes the standard inner product in $L^2(0,1)$ and $\overline{\cdot}$ denotes complex conjugation.

Using a basis of solutions $\{u_i\}_{i=0}^{m-1}$ with $u_i \equiv u(\cdot;k_i)$, the resulting system matrices are defined correspondingly
\begin{eqnarray}
\label{eq:rom_s}
S_{ij} = \langle u_j', u_i'\rangle + \langle qu_j,u_i\rangle,\\
\label{eq:rom_m}
M_{ij} = \langle u_j,u_i\rangle,\\
B_{ij} = f_j\overline{f_i} + g_j\overline{g_i},
\end{eqnarray}
and right-hand-side
\begin{equation}
b_i = -2\imath k \overline{f_i}.
\end{equation}
The main feature making this approach useful for solving the inverse problem is that the system matrices can be computed from the data directly, as per the following Lemma.
\begin{lemma}
\label{lma:rommatrices}
The ROM system matrices $S, M$ (equations (\ref{eq:rom_s}) and (\ref{eq:rom_m})) are given in terms of the boundary data $\{f_i\}_{i=0}^{m-1}$ and $\{g_i\}_{i=0}^{m-1}$ (equation (\ref{eq:boundary_data})) as 
\[
S_{ij} = \imath \left(\frac{k_i k_j B_{ij}}{k_i - k_j} -2 \frac{k_j^2k_i f_j + k_i^2k_j \overline{f}_i}{k_i^2 - k_j^2}\right), \quad i\not=j
\]
\[
S_{ii} = k_i^2\left(\Re(f_i)\Im(f_i') - \Im(f_i)\Re(f_i') + \Re(g_i)\Im(g_i') - \Im(g_i)\Re(g_i')\right)- \Im(f_i') - \Im(f_i)/k_i.
\]
\[
M_{ij} = \imath \left(\frac{B_{ij}}{k_i - k_j} - 2\frac{k_i {f}_j + k_j \overline{f}_i}{k_i^2 - k_j^2}\right), \quad i\not=j
\]
\[
M_{ii} = \Re(f_i)\Im(f_i') - \Im(f_i)\Re(f_i') + \Re(g_i)\Im(g_i') - \Im(g_i)\Re(g_i') - \Im(f_i') + \Im(f_i)/k_i.
\]
\end{lemma}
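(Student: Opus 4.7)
The plan is to obtain, for each pair $(i,j)$, two independent linear relations between $S_{ij}$, $M_{ij}$ and the boundary data, then solve the resulting $2\times 2$ system for $i\neq j$ and recover the diagonal entries by passing to the limit $k_j\to k_i$.

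For the off-diagonal case I would rewrite the integral $\int_0^1 u_j'\overline{u_i'}\,dx$ appearing in $S_{ij}$ in two different ways via integration by parts, transferring the derivative once onto $u_j$ and once onto $\overline{u_i}$, and using the strong equations $u_j''=(q-k_j^2)u_j$ and $\overline{u_i''}=(q-k_i^2)\overline{u_i}$ to cancel the resulting interior integrals against the potential term in $S_{ij}$. This produces
\[
S_{ij}-k_j^2 M_{ij}=[u_j'\overline{u_i}]_0^1,\qquad S_{ij}-k_i^2 M_{ij}=[u_j\overline{u_i'}]_0^1.
\]
Substituting the impedance conditions $u'(0;k)=\imath k(2-u(0;k))$ and $u'(1;k)=\imath k\,u(1;k)$ into the boundary brackets collapses them to
\[
\mathrm{(I)}\ S_{ij}-k_j^2 M_{ij}=\imath k_j(B_{ij}-2\overline{f_i}),\qquad \mathrm{(II)}\ S_{ij}-k_i^2 M_{ij}=\imath k_i(2f_j-B_{ij}).
\]
For $k_i\neq k_j$ this is a nonsingular $2\times 2$ system: subtracting (I) from (II) yields $M_{ij}$, while the combination $k_i^2\cdot\mathrm{(I)}-k_j^2\cdot\mathrm{(II)}$ yields $S_{ij}$, and routine algebra then recovers the stated closed forms.

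For $i=j$ the two identities coincide, leaving only $S_{ii}-k_i^2 M_{ii}=\imath k_i(B_{ii}-2\overline{f_i})$. My plan is to obtain $M_{ii}$ by taking the limit $k_j\to k_i$ in the off-diagonal formula, after which $S_{ii}$ follows from this degenerate relation. Expanding $f_j=f_i+\epsilon f_i'+O(\epsilon^2)$ and $g_j=g_i+\epsilon g_i'+O(\epsilon^2)$ with $\epsilon=k_j-k_i$ exhibits an apparent $1/\epsilon$ singularity whose coefficient is proportional to $|f_i|^2+|g_i|^2-2\Re(f_i)$. This quantity vanishes identically by the energy-conservation identity $|f_i-1|^2+|g_i|^2=1$, which can be extracted from the imaginary part of (I) at $i=j$ (or from the Wronskian relation between $u_i$ and $\overline{u_i}$). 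Once this cancellation is used, the $O(1)$ part of the expansion produces the formula for $M_{ii}$.

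The main obstacle is the diagonal case rather than the off-diagonal one: the off-diagonal formulas are not manifestly regular at $k_j=k_i$, and the limit is only well defined because of the scattering/unitarity identity above. A more transparent alternative for $M_{ii}$ that avoids the singular limit altogether is to let $v_i=\partial_k u(\cdot;k)|_{k=k_i}$, which satisfies $(-\partial_x^2+q-k_i^2)v_i=2k_iu_i$ with the $k$-differentiated impedance conditions, and to apply a Lagrange-type identity to the pair $(u_i,\overline{v_i})$: the interior terms reduce to $-2k_i|u_i|^2$, so integration from $0$ to $1$ expresses $2k_iM_{ii}$ as a pure boundary-data expression, which one then verifies matches the Taylor-limit result.
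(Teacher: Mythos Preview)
Your argument is correct and coincides with the paper's proof in its essential structure: both derive the same pair of linear relations \((\mathrm{I})\) and \((\mathrm{II})\) and solve the resulting \(2\times 2\) system for \(i\neq j\), then pass to the limit for the diagonal. The paper obtains \((\mathrm{I})\) directly from the weak form with test function \(u_i\) and obtains \((\mathrm{II})\) by swapping \(i\leftrightarrow j\) and invoking the Hermitian symmetry of \(S,M,B\); your two integrations by parts produce literally the same identities, so this is only a cosmetic difference.

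The one place where your presentation diverges is the diagonal case. The paper handles it by noting a priori that \(M_{jj}\in\mathbb{R}\) (diagonal of a Hermitian matrix), so only the real part of the off-diagonal expression survives in the limit; it never isolates or names the singular coefficient. You instead make the mechanism explicit: the \(1/\epsilon\) coefficient is \(|f_i|^2+|g_i|^2-2\Re f_i\), which vanishes by the unitarity relation \(|f_i-1|^2+|g_i|^2=1\) read off from the imaginary part of \((\mathrm{I})\) at \(i=j\). This is equivalent to, but more transparent than, the paper's ``take real parts'' shortcut. Your alternative route via \(v_i=\partial_k u|_{k=k_i}\) and a Lagrange identity with \((u_i,\overline{v_i})\) is not in the paper; it is a cleaner way to get \(M_{ii}\) without any singular limit, at the cost of introducing the \(k\)-differentiated boundary conditions.
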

The proof of this Lemma can be found in the appendix.

Correspondingly, the approximate solution is then given by 
\begin{equation}
\label{eq:urom}
\widetilde{u}(x;k) = \sum_{i=0}^{m-1} c_i(k) u_i(x),
\end{equation}
with
\[
\left(S - k^2 M - \imath k B\right)\mathbf{c}(k) = \mathbf{b}(k).
\]
\begin{remark}
\label{rmk:1}
From the proof of Lemma \ref{lma:rommatrices} we see that $c_i(k_j) = \delta_{ij}$. Thus $\widetilde{u}$ will match the boundary data.
\end{remark}
We refer the reader to \cite{Fink1983OnTE,Maday2002APC,Veroy2003APE,Sen2006NaturalNA} for the discussion regarding the approximation error of such ROM-approximations.

\section{The inverse problem\label{The inverse problem}}
The inverse problem is now to retrieve $q$ from boundary measurements at wave numbers $\{k_i\}_{i=0}^{m-1}$. As outlined in the introduction, this is achieved in a 2-step procedure. First the states $\{u_i\}_{i=0}^{m-1}$ are estimated from the data, and subsequently the scattering potential is estimated from these approximated states.

\subsection{Estimating the state}
As outlined in the previous section, we can compute the \emph{coefficients} in \eqref{eq:urom} directly from the data following the ROM-based approach. Since the basis $\{u_i\}_{i=0}^{m-1}$ needed to evaluate \eqref{eq:urom} is unknown, however, we need to use a different basis. The basic idea is to use states $\{u^{(0)}_i\}_{i=0}^{m-1}$ corresponding to a given $q^{(0)}$ instead. It is tempting to directly replace  \eqref{eq:urom} by
\[
\widetilde{u}(x;k) = \sum_{i=0}^{m-1} c_i(k) u_i^{(0)}(x),
\]
however, this will not work as it would yield $\widetilde{u}(x;k_i)  = u_i^{(0)}(x)$, see Remark \ref{rmk:1}. Below, we discuss two alternatives.

\subsubsection{Lanczos orthogonalization}
\label{section:Lanczos}
The authors of \cite{Druskin2021LippmannSchwingerLanczosAF} propose to use an orthogonalization procedure as follows. They first apply the $M$-orthogonal Lanczos procedure to $M^{-1}S$, which yields matrices $Q\in\mathbb{C}^{m\times r}$ and $T \in \mathbb{C}^{r \times r}$, where $r\leq m$, satisfying
\[
Q^*SQ = T, \quad Q^*MQ = I.
\]
The ROM-approximation of the state is then given by 
\begin{equation}
\label{eq:urom2}
\widetilde{u}(x;k) = \sum_{i=0}^{m-1} c_i(k) v_i(x),
\end{equation}
with $\mathbf{c}$ satisfying
\[
\left(T - k^2 I - \imath k Q^*BQ\right)\mathbf{c}(k) = Q^*\mathbf{b}(k),
\]
and $\{v_i\}_{i=0}^{r-1}$ an \emph{orthogonal} basis w.r.t. the regular $L^2$-inner product defined as
\[
v_j = \sum_{i=0}^{m-1}Q_{ij}u_i.
\]
The expression in \eqref{eq:urom2} is equivalent to \eqref{eq:urom} (although the coefficients differ). Because we do not have access to the states $\{u_i\}_{i=0}^{m-1}$, and cannot form the orthogonal basis $\{v_i\}_{i=0}^{r-1}$, we replace it by $\{v_i^{(0)}\}_{i=0}^{r-1}$, obtained as
\[
v_j^{(0)} = \sum_{i=0}^{m-1}Q_{ij}^{(0)}u_i^{(0)},
\]
where the states $u_i^{(0)}$ are the solutions for a reference scattering potential $q^{(0)}$ and $Q^{(0)}$ is obtained by applying the Lanczos procedure to the corresponding system matrices.
\begin{remark}
\label{rmk:2}
In practice, we replace $M$ by $M+\epsilon I$ for some $\epsilon>0$ to ensure it is invertible and to stabilize the Lanczos procedure.
\end{remark}
\subsubsection{Data-assimilation}
\label{section:DA}
An alternative approach is inspired by \cite{van2015penalty} and sets up an overdetermined system of equations which ensures that the resulting estimate of the internal solution closely matches the data. We directly define the approximated state in terms of the reference solutions
\[
\widetilde{u}(x;k) = \sum_{i=0}^{m-1} c_i(k) u_i^{(0)}(x),
\]
where the coefficients $\mathbf{c}(k)$ are obtained by solving the following least-squares problem
\begin{equation}
\label{eq:DA}
\min_{\mathbf{c}}\left\|\left(\begin{array}{c}
S - k^2 M - \imath k B \\
\rho{\mathbf{f}^{(0)}}^T \\
\rho {\mathbf{g}^{(0)}}^T 
\end{array}\right)
\mathbf{c} - 
\left(\begin{array}{c}
\mathbf{b}(k) \\
\rho f(k)\\
\rho g(k)
\end{array}\right)\right\|_2,
\end{equation}
where $\rho > 0$ is a penalty parameter controlling the trade-off between data-fit and model-fit. The required data $f(k)$ and $g(k)$ can be obtained by solving \eqref{eq:urom} and using the coefficients to interpolate them.

\subsection{Estimating the scattering potential}
Using the weak formulation of the differential equation we obtain a Lippmann-Schwinger-type equation,
\begin{equation}
\label{eq:LS1}
f(k) - f^{(0)}(k) = - \frac{1}{2\imath k}\int_0^1 u^{(0)}(x;k) u(x;k) (q(x) - q_0(x)) \mathrm{d}x.
\end{equation}
Representing $q$ in terms of a suitable basis and enforcing the equation for wavenumbers $\{k_i\}_{i=0}^{m-1}$ yields a system of equations. In practice, we replace $u$ by its approximation $\widetilde{u}$ and solve it in a least-squares sense to obtain an estimate of $q$:
\begin{equation}
\label{eq:LS}
\min_{\mathbf{q}} \|K\mathbf{q} - (\mathbf{f}-\mathbf{f}^{(0)})\|_2^2 + \alpha \|\mathbf{q}\|_2^2.
\end{equation}
\begin{remark}
Note that replacing $u$ by $\widetilde{u}$ in \eqref{eq:LS1} induces an error in $K$. To explicitly account for this, a Total Least-Squares (TLS) formulation (see e.g. \cite{tataris2022regularised} for its use in inverse scattering) might be beneficial. 
\end{remark}
\section{Numerical results\label{Numerical results}} 

The inversion procedure consists of two steps; state estimation and estimation of the scattering potential from the states. For the first step, we use either the Lanczos orthogonalization approach (LO) with parameter $\epsilon$, or the data-assimilation approach (DA) with parameter $\rho$. With the approximated states, the scattering potential is then estimated by solving the regularized Lippmann-Schwinger equation, with parameter $\alpha$. This two-step algorithm is outline in Algorithm \ref{alg}. Implementation of the described method is fairly straightforward. The code used to produce these results is available at \url{https://github.com/ucsi-consortium/1DInverseScatteringROM}.

\begin{algorithm}
\caption{Overview of the two-step inversion procedure to estimate the states and scattering potential from boundary data.}
\label{alg}
\begin{algorithmic}
    \REQUIRE{reference $q^{(0)}$, data $f, g$ at wavenumbers $\{k_i\}_{i=0}^{m-1}$, regularisation parameters ($(\epsilon,\alpha)$ or $(\rho, \alpha)$)}
    \ENSURE{reconstructed states $\{\widetilde{u}_i\}_{i=0}^{m-1}$ and scattering potential $\widetilde{q}$.}
    \STATE{}
    \STATE{\textit{Step 1: state estimation}}
    \STATE{\hspace{1cm}Compute ROM-matrices $M, S, B$ according to Lemma \ref{lma:rommatrices}}
    \STATE{\hspace{1cm}Compute reference states $\{u_i^{(0)}\}_{i=0}^{m-1}$ corresponding to $q^{(0)}$.}
    \STATE{\hspace{1cm}Compute approximate states $\{\widetilde{u}_i\}_{i=0}^{m-1}$ at wavenumbers $\{k_i\}_{i=0}^{m-1}$ according to the LO or DA procedures (outlined in sections 3.1.1, 3.1.2 resp.)}
    \STATE{}
    \STATE{\textit{Step 2: estimating the scattering potential}}
    \STATE{\hspace{1cm}Reconstruct the scattering potential $
    \widetilde{q}$ according to the procedure outlined in section 3.1.3}
\end{algorithmic}

\end{algorithm}
\clearpage
\subsection{Experimental settings}
To illustrate the methods, we use the scattering potential depicted in figure \ref{qfg}. The data are obtained by numerically solving the Schr{\"o}dinger equation for $m=10$ equispaced wave numbers in the interval $(0,10)$.

\begin{figure}
    \centering
    \includegraphics[scale=.5]{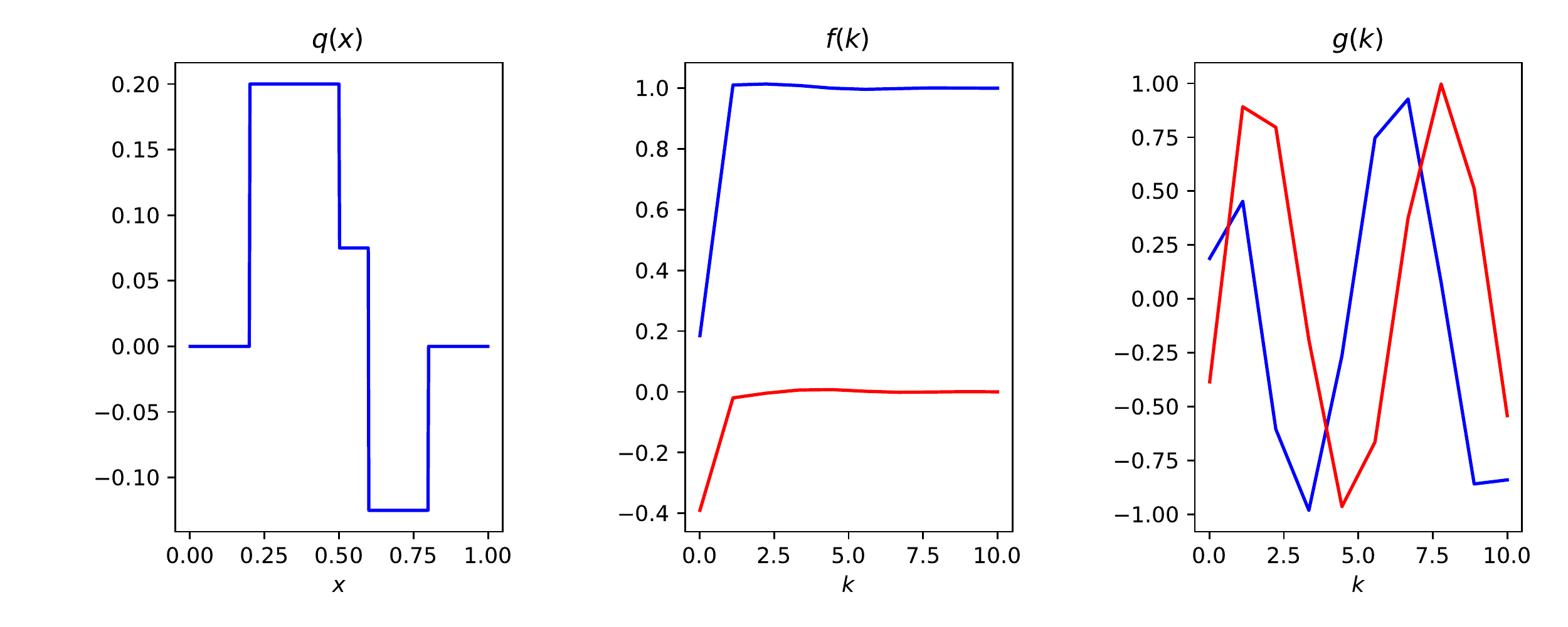}
    \caption{From left to right, the scattering potential $q$, the real (blue) and imaginary (red) part of the reflection data, $f,$ and the real and imaginary part of the transmission data, $g,$.}
    \label{qfg}
\end{figure}
\clearpage
\subsection{Benchmark results}
As a benchmark, we reconstruct the scattering potential using the approach described in section 3.1.3 using the true states (as the ideal setting) and the reference states for $q^{(0)} = 0$ (which corresponds to the Born approximation). The results are shown in figures \ref{rec_true} and \ref{rec_back}. Even using the true states we do not get a perfect reconstruction of the scattering potential due to the band-limited nature of the data. Furthermore, the inferior result obtained using the Born approximation underlines the need for non-linear inversion.
\clearpage
\begin{figure}
    \centering
    \includegraphics{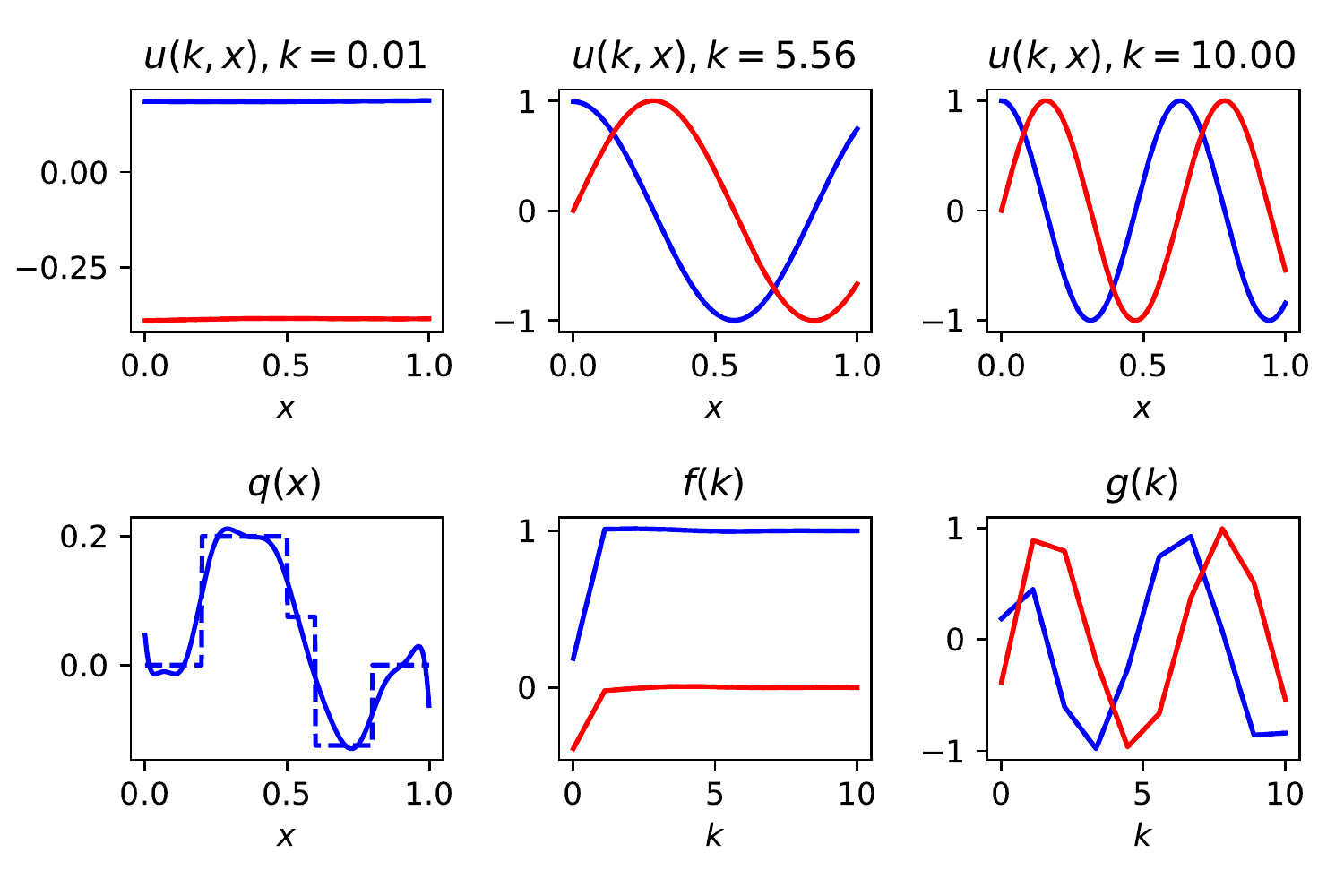}
    \caption{Results using the true state to reconstruct the scattering potential. The top row shows the (reconstructed) states (solid) used in the subsequent step to estimate the scattering potential as well as the true states (dashed). In the second row we see the reconstructed scattering potential (solid) and the corresponding data. The real part of the quantities is shown in blue, while the imaginary part is shown in red.}
    \label{rec_true}
\end{figure}
\begin{figure}
    \centering
    \includegraphics{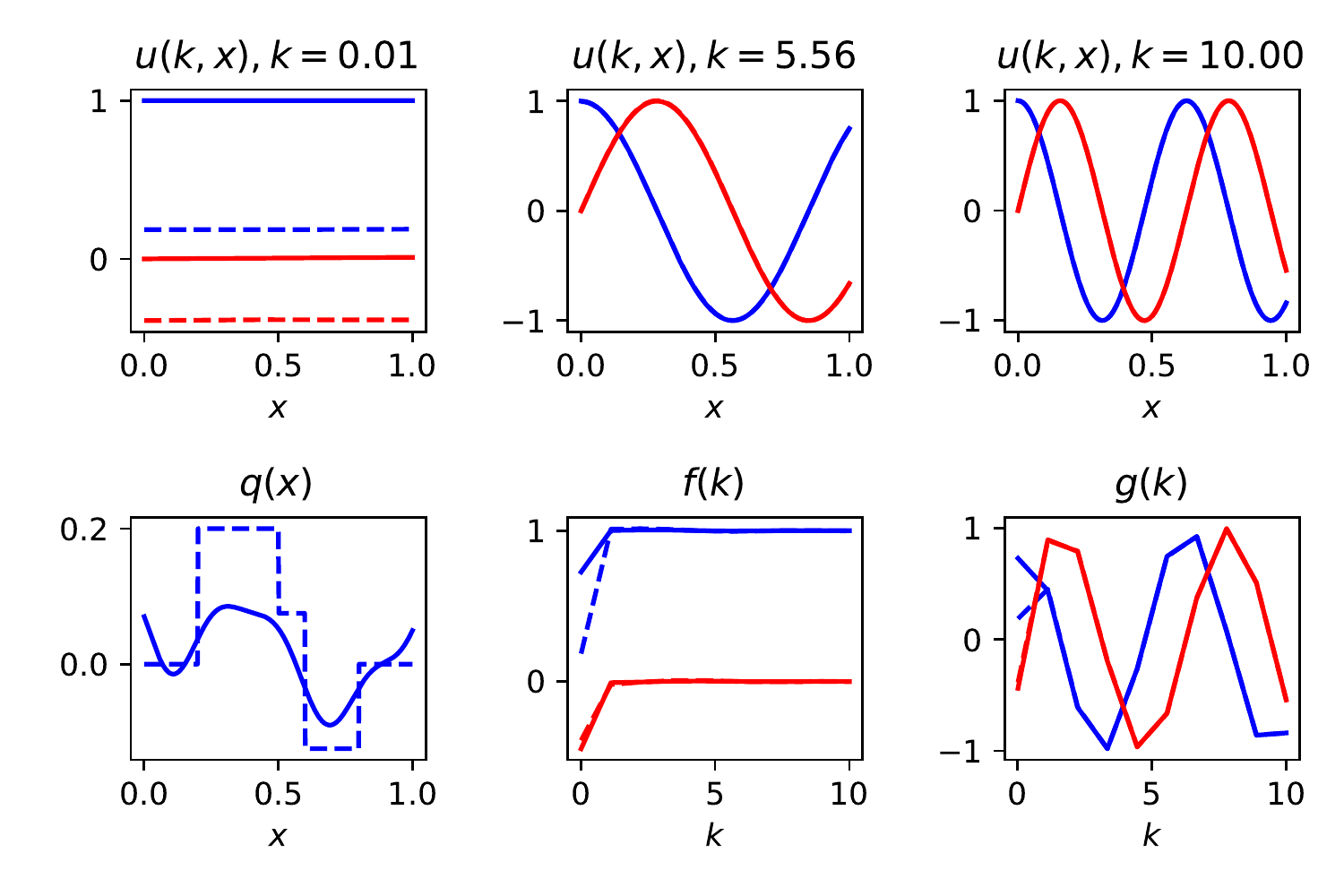}
    \caption{Results using the reference state to reconstruct the scattering potential (i.e., the Born approximation). The top row shows the (reconstructed) states  (solid) used in the subsequent step to estimate the scattering potential as well as the true states (dashed). In the second row we see the reconstructed scattering potential (solid) and the corresponding data. The real part of the quantities is shown in blue, while the imaginary part is shown in red.}
    \label{rec_back}
\end{figure}

\clearpage

\subsection{Noiseless data}
Next, we present the results yielded by the (LO) and (DA) methods for noise-free data in figures \ref{rec_Lan}, \ref{rec_DA} respectively. We observe that the DA method gives slightly more accurate reconstructions of the states. The corresponding reconstructed scattering potentials are slightly different, but there seems to be little difference in the accuracy of the reconstructions.
\clearpage
\begin{figure}
    \centering
    \includegraphics{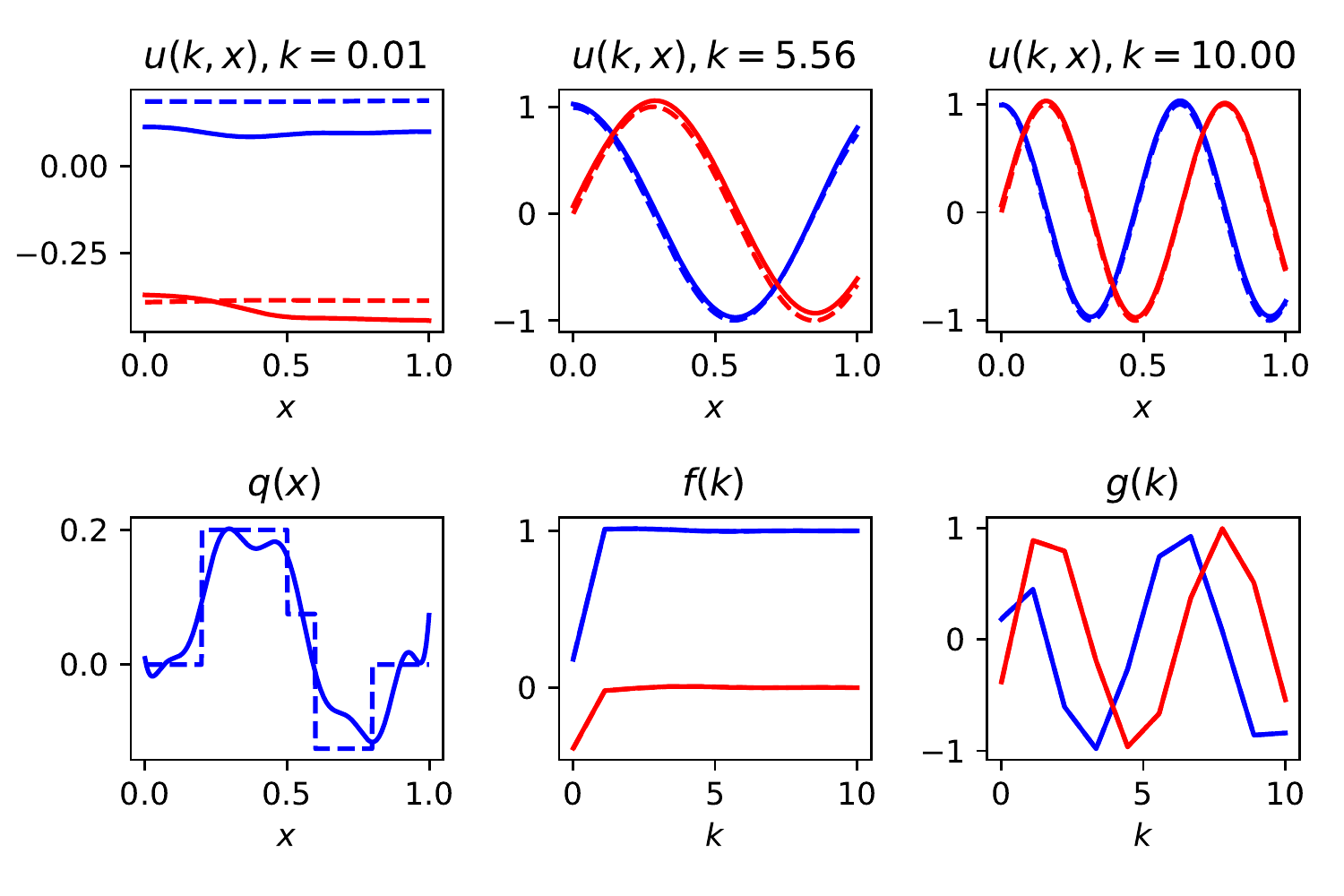}
    \caption{Results using LO-approach on noiseless data. The top row shows the (reconstructed) states (solid) used in the subsequent step to estimate the scattering potential as well as the true states (dashed). In the second row we see the reconstructed scattering potential (solid) and the corresponding data. The real part of the quantities is shown in blue, while the imaginary part is shown in red.}
    \label{rec_Lan}
\end{figure}
\begin{figure}
    \centering
    \includegraphics{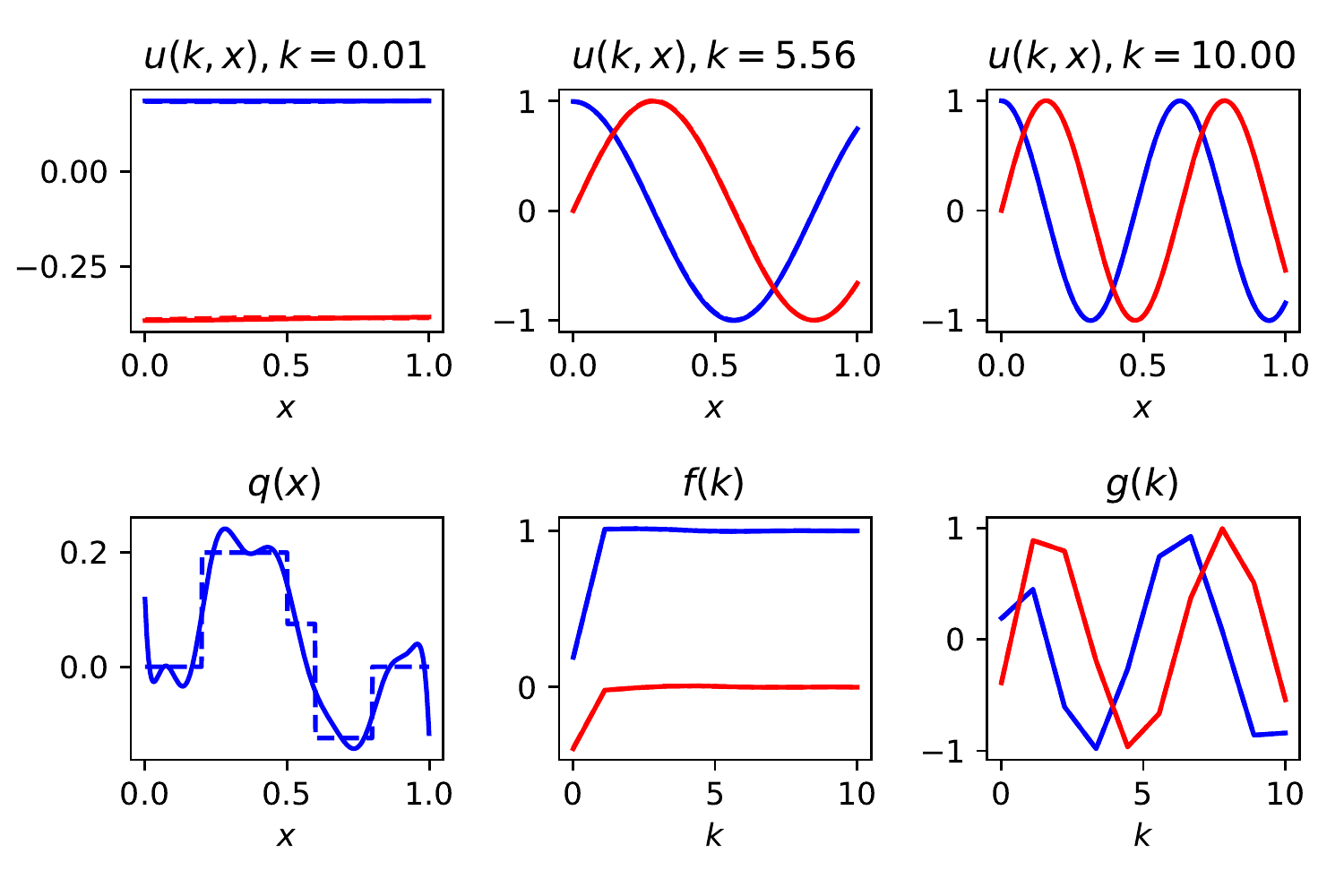}
    \caption{Results using DA-approach on noiseless data. The top row shows the (reconstructed) states (solid) used in the subsequent step to estimate the scattering potential as well as the true states (dashed). In the second row we see the reconstructed scattering potential (solid) and the corresponding data. The real part of the quantities is shown in blue, while the imaginary part is shown in red.}
    \label{rec_DA}
\end{figure}
\clearpage
\subsection{Noisy data}
In this subsection we compare the methods on noisy data. In particular, we add i.d.d. normally distributed noise to the data with mean zero and variance $\sigma^2$. The parameters $\epsilon, \rho, \alpha$ are chosen to yield the best approximation (as measured by the $L^2$ error between the reconstructions and the ground-truth, averaged over 100 realizations of the noise). The corresponding plots showing the dependence of the error on the parameters are included in the appendix. In table \ref{tabone} we summarize the results for varying $\sigma$. The corresponding plots are shown in figure \ref{fig:noisyresults}. As expected, the noise influences the reconstruction of the state and consequently the reconstruction of the scattering potential. Overall, we see that the DA method gives superior estimates of the state. In terms of the scattering potential there is no significant difference between both methods, however, for moderate noise levels the DA method gives more stable results with a much smaller variance in the error.
\clearpage
\begin{table}
\caption{Comparison between the relative errors in reconstructed states and scattering potential for both methods. We report the average and standard deviation over 100 realizations of the noise.}
\label{tabone}
\begin{indented}
\lineup
\item[]\begin{tabular}{@{}*{5}{l}}
\br                              
$\sigma$&method&parameters& error in $u$&error in $q$\cr
\mr
$10^{-6}$ & LO$(\epsilon, \alpha)$ & $(10^{-3},10^{-3})$ & $1.5\cdot 10^{-1}\,(1.6\cdot 10^{-3})$ & $4.7\cdot 10^{-1}\,(3.2\cdot 10^{-3})$
\cr \mr
          & DA$(\rho, \alpha)$ & $(10^{-2},10^{-4})$ & $6.1\cdot 10^{-3}\,(1.4\cdot 10^{-5})$ & $3.9\cdot 10^{-1}\,(2.3\cdot 10^{-3})$
\cr \mr
$10^{-5}$ & LO$(\epsilon, \alpha)$ & $(10^{-2},10^{-3})$ & $1.5\cdot 10^{-1}\,(5.3\cdot 10^{-4})$ & $4.6\cdot 10^{-1}\,(2.3\cdot 10^{-3})$
\cr \mr
          & DA$(\rho, \alpha)$ & $(10^{-1},10^{-3})$ & $6.1\cdot 10^{-3}\,(3.0\cdot 10^{-5})$ & $4.5\cdot 10^{-1}\,(2.8\cdot 10^{-3})$
\cr \mr
$10^{-4}$ & LO$(\epsilon, \alpha)$ & $(10^{-2},10^{-2})$ & $1.8\cdot 10^{-1}\,(1.5\cdot 10^{-1})$ & $5.7\cdot 10^{-1}\,(1.4\cdot 10^{-1})$
\cr \mr
          & DA$(\rho, \alpha)$ & $(10^{-1},10^{-2})$ & $6.2\cdot 10^{-3}\,(3.4\cdot 10^{-4})$ & $5.3\cdot 10^{-1}\,(3.2\cdot 10^{-3})$
\cr \mr
$10^{-3}$ & LO$(\epsilon, \alpha)$ & $(10^{-1},10^{-2})$ & $2.1\cdot 10^{-1}\,(1.2\cdot 10^{-1})$ & $6.2\cdot 10^{-1}\,(1.3\cdot 10^{-1})$
\cr \mr
          & DA$(\rho, \alpha)$ & $(10^{ 0},10^{-2})$ & $6.4\cdot 10^{-3}\,(7.0\cdot 10^{-4})$ & $6.0\cdot 10^{-1}\,(5.9\cdot 10^{-2})$
\cr \mr
$10^{-2}$ & LO$(\epsilon, \alpha)$ & $(10^{-1},10^{-1})$ & $2.6\cdot 10^{-1}\,(7.1\cdot 10^{-1})$ & $9.2\cdot 10^{-1}\,(9.4\cdot 10^{-2})$
\cr \mr
          & DA$(\rho, \alpha)$ & $(10^{ 1},10^{-1})$ & $1.4\cdot 10^{-2}\,(4.4\cdot 10^{-3})$ & $9.2\cdot 10^{-1}\,(9.0\cdot 10^{-2})$
\cr\br
\end{tabular}
\end{indented}
\end{table}
\clearpage
\begin{figure}
\centering
\begin{tabular}{cc}
\includegraphics[scale=.4]{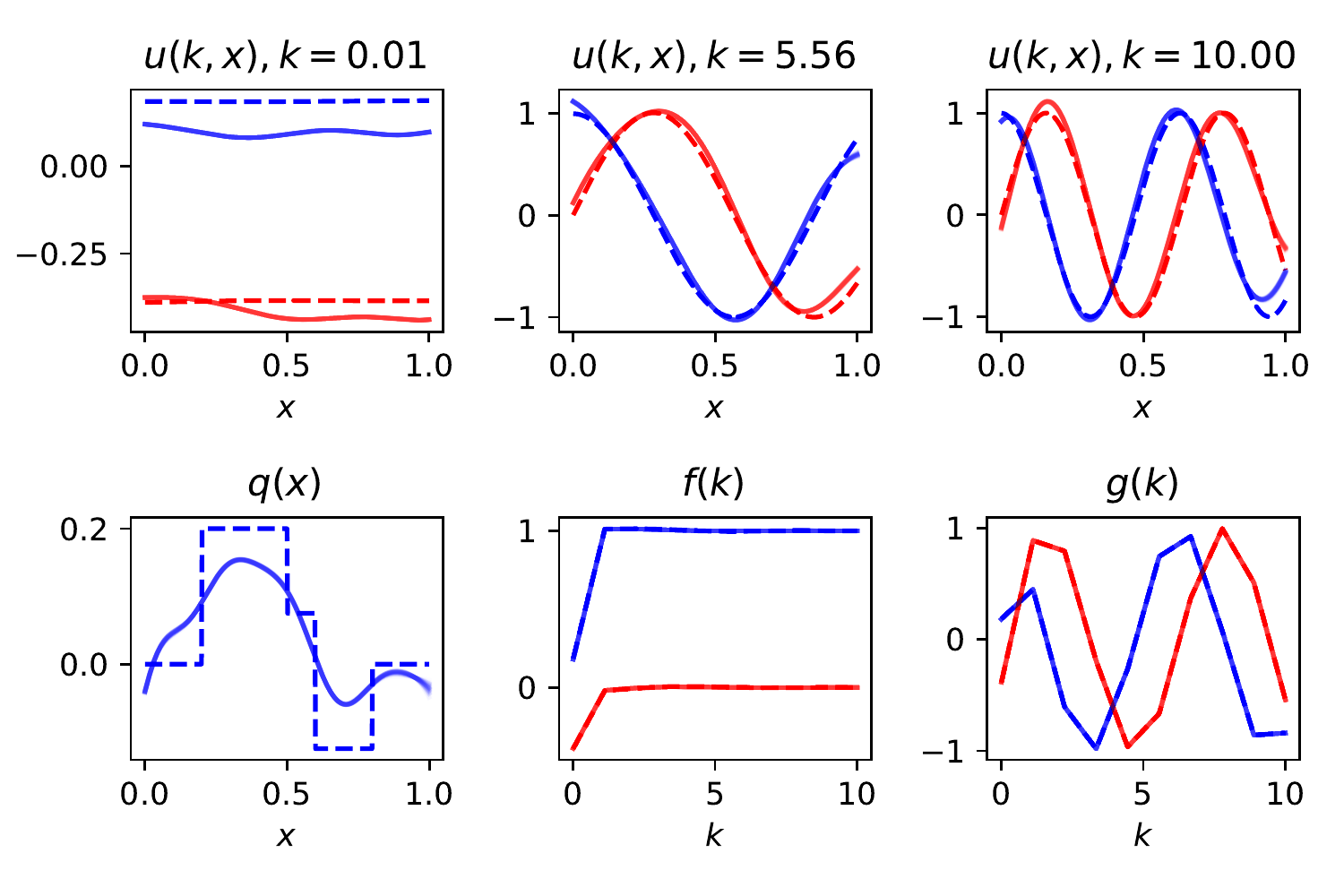} & \includegraphics[scale=.4]{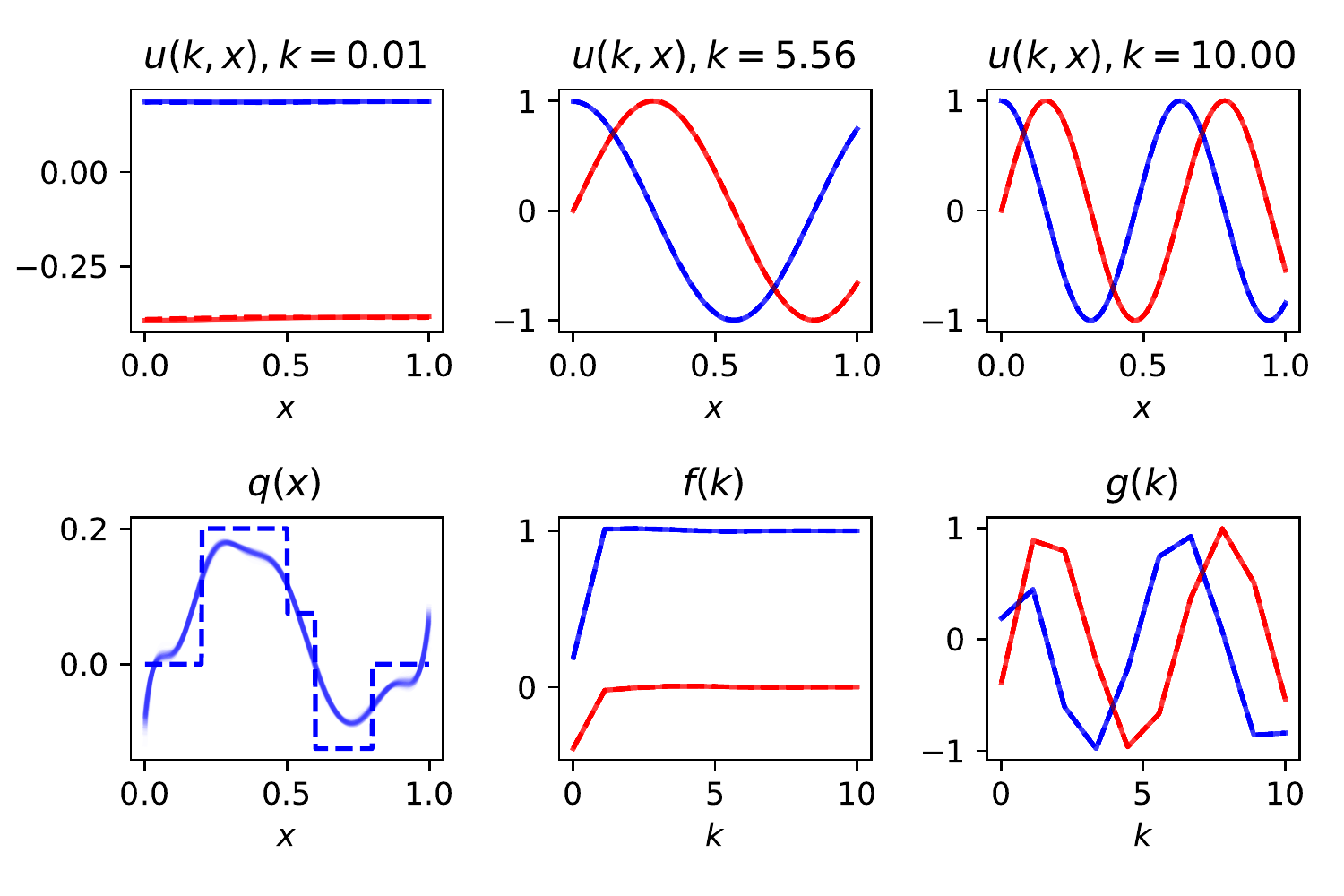}\\
\hline
\includegraphics[scale=.4]{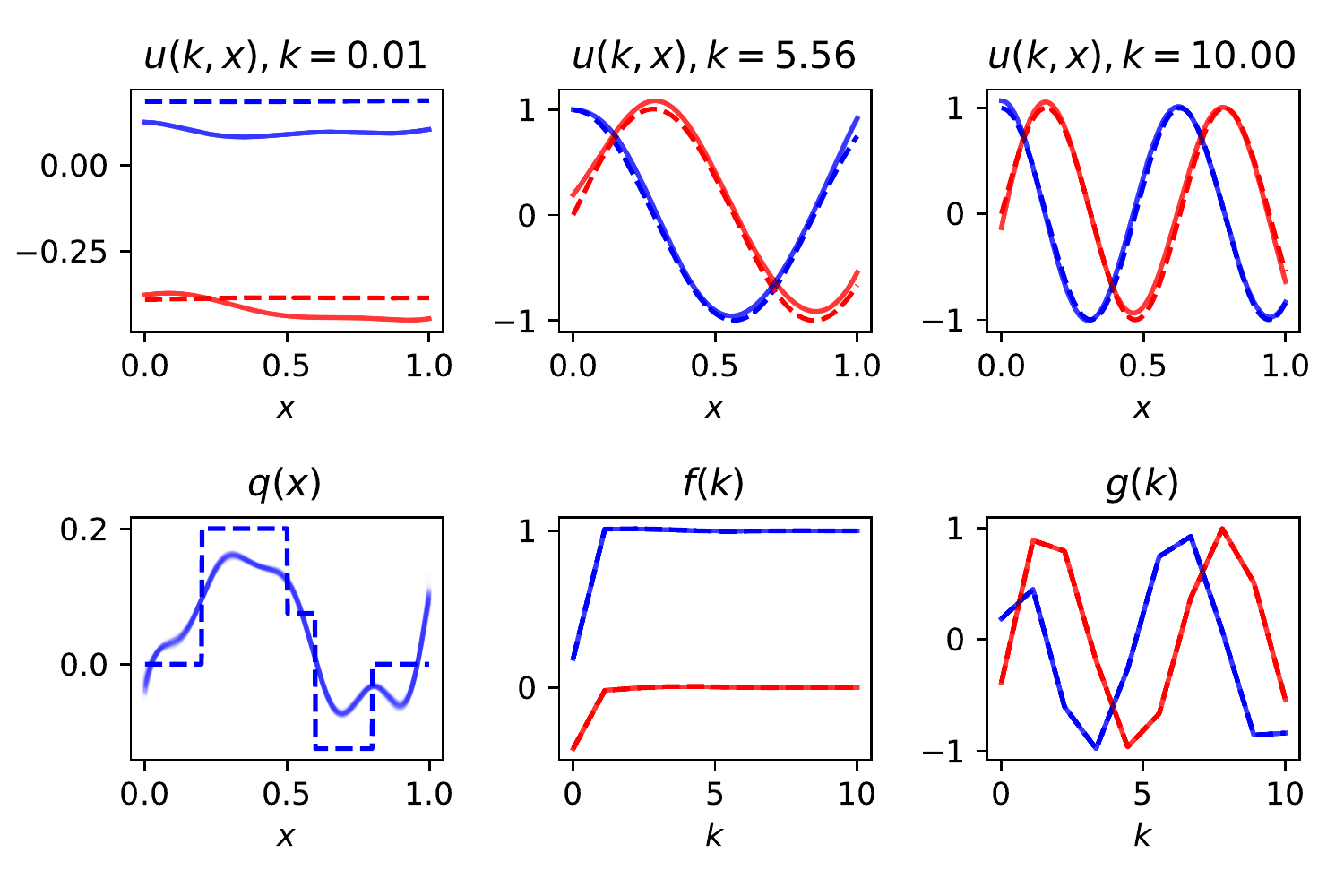} & \includegraphics[scale=.4]{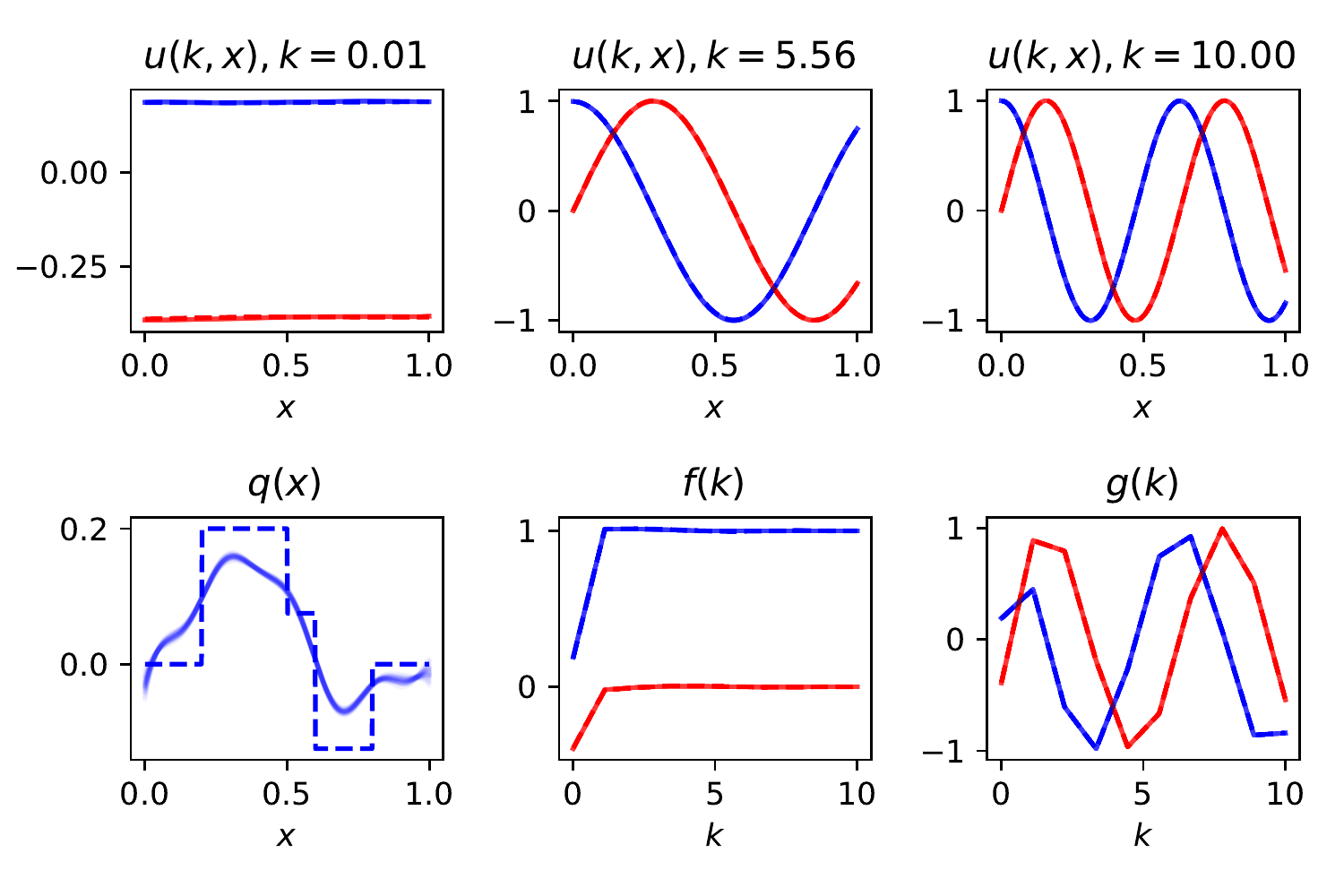}\\
\hline
\includegraphics[scale=.4]{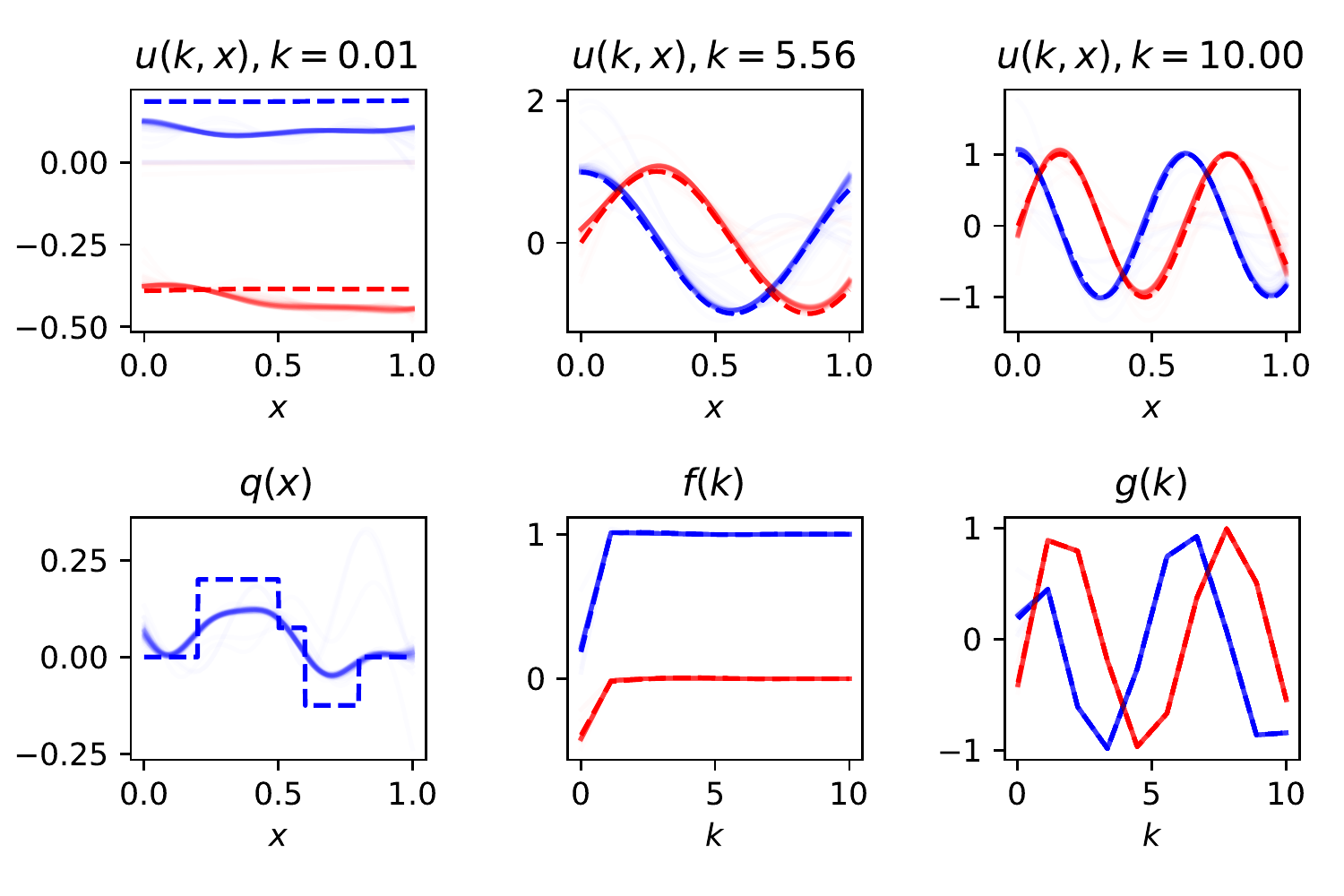} & \includegraphics[scale=.4]{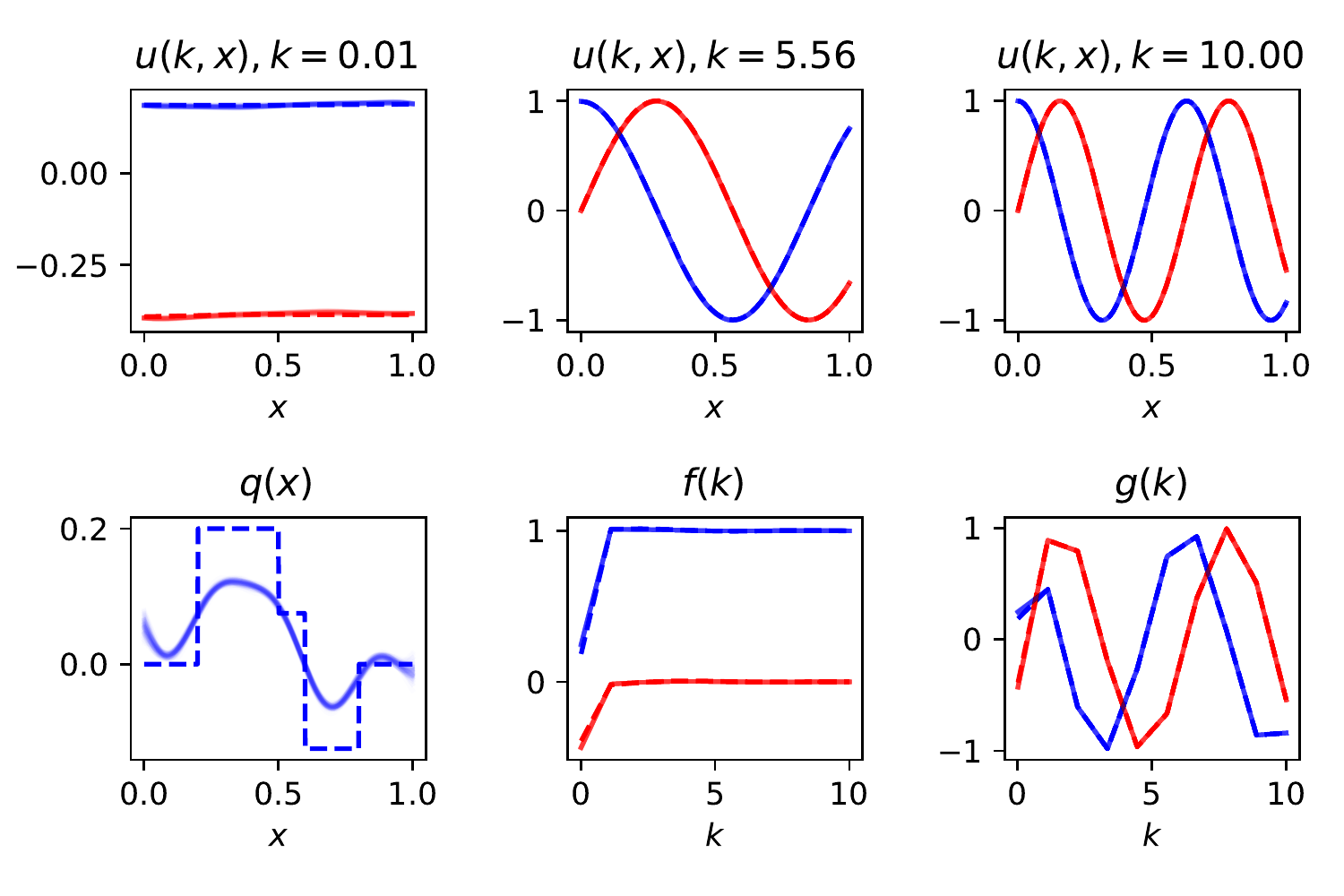}\\
\hline
\includegraphics[scale=.4]{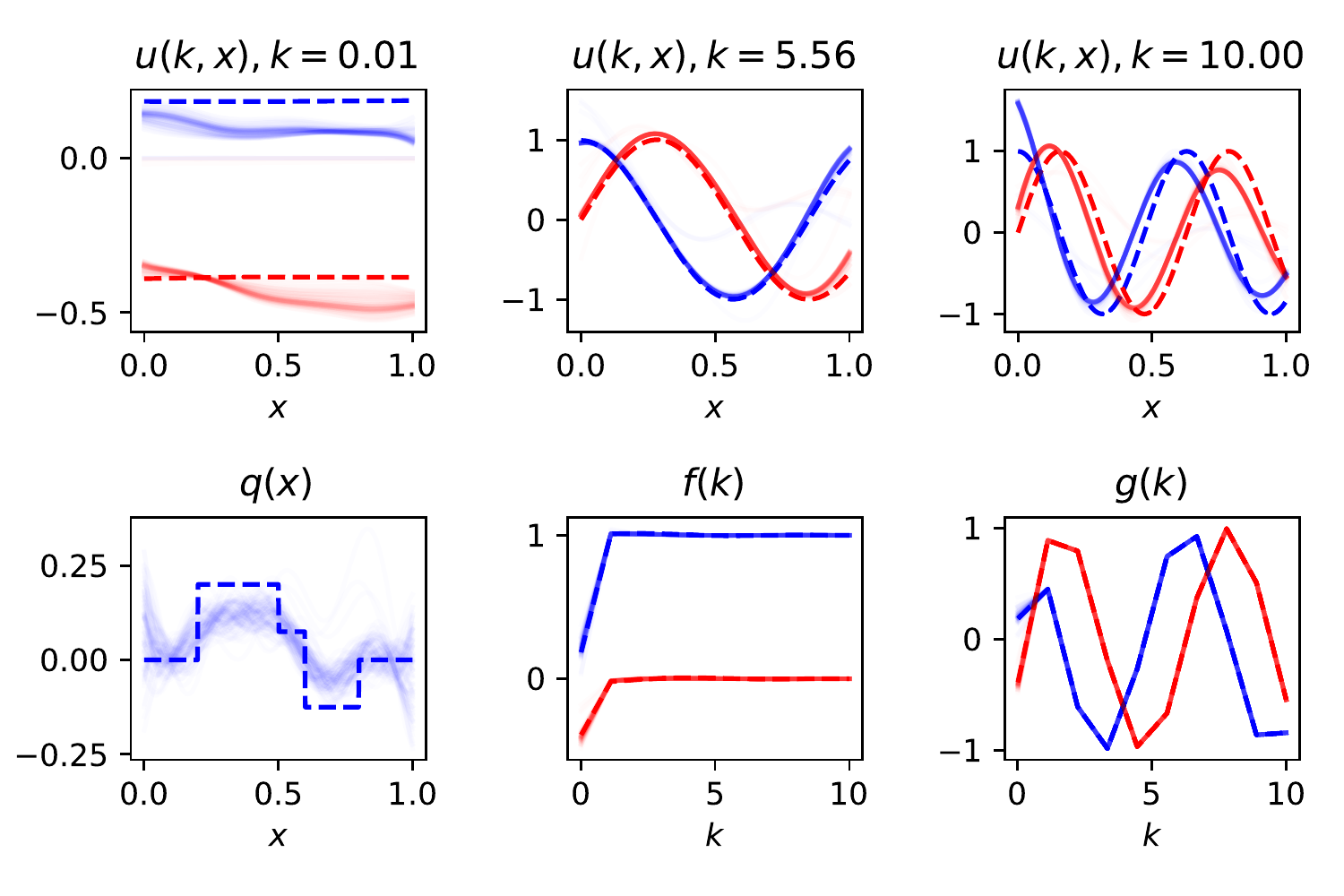} & \includegraphics[scale=.4]{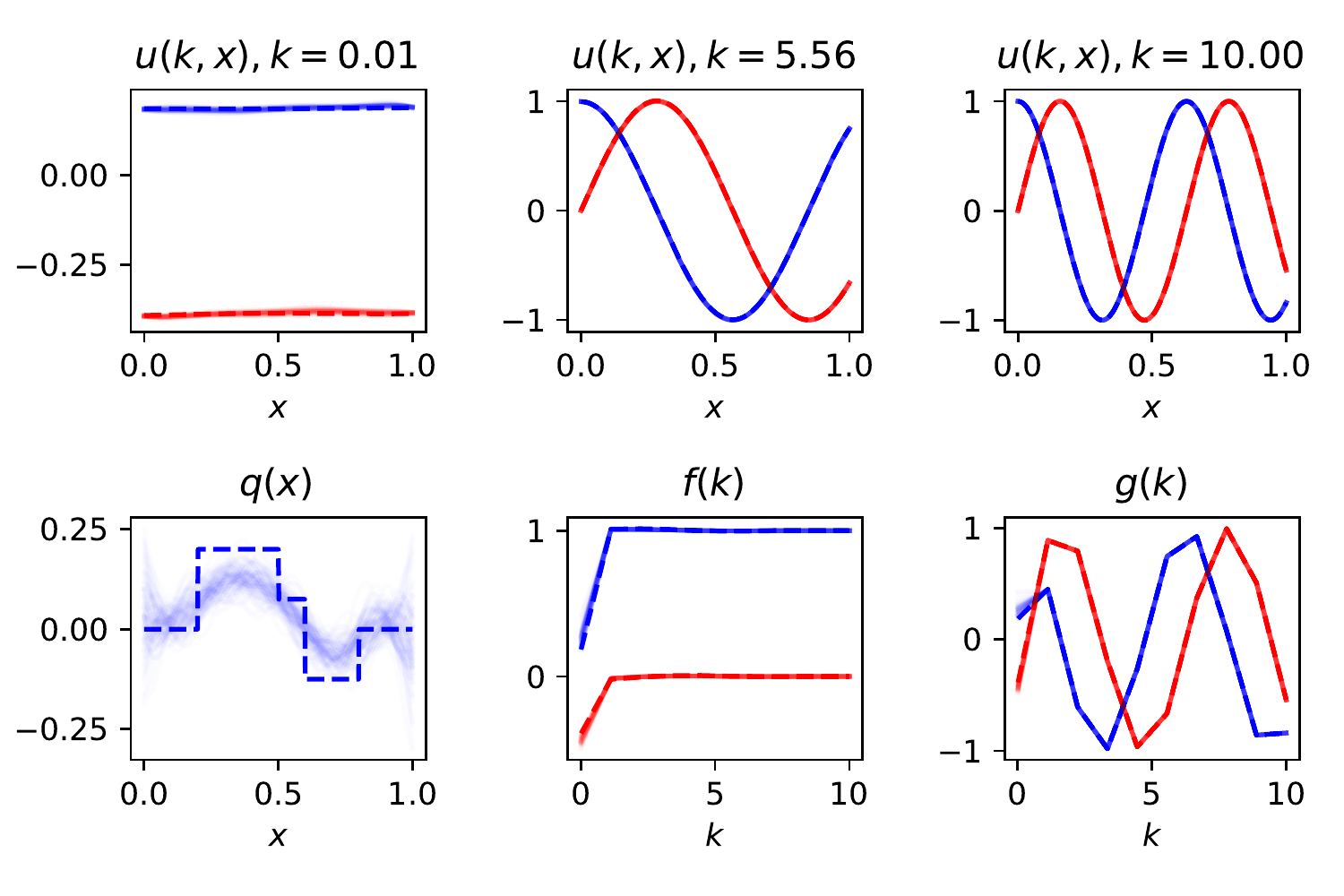}\\
\end{tabular}
\caption{Results for the LO (left) and DA (right) methods for varying noise levels ($\sigma = 10^{-6}, 10^{-5}, 10^{-4}, 10^{-3}$ respectively from top to bottom). The subplots follow the same layout as the previous figures. Individual results for different realizations of the noise are superimposed to clearly show the variation.}
\label{fig:noisyresults}
\end{figure}
\clearpage
\section{Discussion and Conclusion}
We treat the inverse problem of retrieving the scattering potential in a 1D Schrödinger equation from boundary data. To do this, we propose a two-step approach inspired by a previously-published ROM-based method. We extend this method, previously applied to 1D diffusion problems with Neumann boundary conditions, to the 1D Schrödinger equation with impedance boundary conditions. In particular, we presented explicit expressions for retrieving the ROM-matrices from boundary data and proposed a novel approach for approximating the state from these matrices. This approach, based on ideas from data-assimilation, is an alternative to the previously proposed method based on Lanczos-orthogonalization. Given the estimates of the states, the scattering potential is obtained by solving an integral equation.

We compared the two approaches numerically on a simulated example with varying noise levels. These experiments suggest that the data-assimilation approach for estimating the state is more accurate and stable and leads to a more stable estimate of the scattering potential for moderate noise levels.

This work is the first step towards extending the ROM-based approach to frequency-domain wave-like problems (e.g., the Helmholtz equation) and 2D/3D. Other open questions for further research include the approximation error, stability estimates, and more practical aspects such an iterative approach where the reference potential is iteratively updated.

\section*{Acknowledgements}
This work was supported by the Utrecht Consortium for Subsurface Imaging (UCSI).
\clearpage
\appendix 
\section{Proofs}
\begin{proof}[Proof of Lemma \ref{lma:rommatrices}]
From the weak form we find
$$S_{ij} - k_j^2 M_{ij} - \imath k_j B_{ij} = -2\imath k_j \overline{f}_i,$$
and
$$S_{ji} - k_i^2 M_{ji} - \imath k_i B_{ji} = -2\imath k_i \overline{f}_j,$$
from which (by taking the conjugate transpose and using the fact that the matrices involved are Hermitian)
$$S_{ij} - k_i^2 M_{ij} + \imath k_i B_{ij} = 2 \imath k_i f_j.$$
Combining these yields
$$(k_i^2 - k_j^2)M_{ij} - \imath (k_i + k_j)B_{ij} = -2 \imath (k_i {f}_j + k_j \overline{f}_i),$$
and
$$(k_i^2 - k_j^2)S_{ij} - \imath (k_j^2k_i + k_i^2k_j)B_{ij} =  -2 \imath (k_j^2k_i f_j + k_i^2k_j\overline{f_i}),$$
from which we can compute $M_{ij}$ and $S_{ij}$:
$$M_{ij} = \imath \left(\frac{B_{ij}}{k_i - k_j} - 2\frac{k_i {f}_j + k_j \overline{f}_i}{k_i^2 - k_j^2}\right).$$
$$S_{ij} = \imath \left(\frac{k_i k_j B_{ij}}{k_i - k_j} -2 \frac{k_j^2k_i f_j + k_i^2k_j \overline{f}_i}{k_i^2 - k_j^2}\right).$$
For the diagonal elements we need to take a limit of the above two relations.
We first compute the diagonal elements of $M$. We set $\lambda=k_j^2$, and $k^2_i=\lambda+h$.
We also define $f(k_j)=\phi(\lambda)=\phi_1+\imath \phi_2$ and $\phi(\lambda+h)=\phi_1^h+\imath \phi_2^h$
and similarly $ \gamma(\lambda)=g(k_j)$. 
Since $\Im(M_{jj})=0$, we obtain
\begin{equation*}
   M_{jj}=\lim_{h
   \to 0} \Big\{ -2\frac{\sqrt{
    \lambda} \phi_2^h-\sqrt{\lambda+h}\phi_2}{h}-\frac{\gamma_2\gamma_1^h-\gamma_1\gamma_2^h +\phi_2\phi_1^h-\phi_1\phi_2^h}{\sqrt{\lambda+h}-\sqrt{\lambda}}\Big\}=
\end{equation*}
\begin{equation*}
    -2 \Big(\sqrt{\lambda} \frac{d\phi_2}{d\lambda}(\lambda)-\frac{1}{2}\lambda^{-1/2}\phi_2(\lambda)\Big)- \gamma_2(\lambda) 2
    \sqrt{\lambda } \frac{d\gamma_1}{d
    \lambda}(\lambda)+\gamma_1(\lambda)2\sqrt{\lambda}\frac{d\gamma_2(\lambda)}{d\lambda}-
\end{equation*}
\begin{equation}
    \phi_2(\lambda) 2
    \sqrt{\lambda } \frac{d\phi_1}{d
    \lambda}(\lambda)+\phi_1(\lambda)2\sqrt{\lambda}\frac{d\phi_2(\lambda)}{d\lambda}.
\end{equation}
The product rule gives that $\frac{d\phi}{d\lambda}=\frac{df}{dk}\frac{dk}{dk^2}=f'(k)(2k)^{-1}$. Similarly for $\gamma.$ Combining gives,
\begin{equation*}
    M_{jj}=\Big\{-2\Big(k \frac{1}{2k}\Im(f')-\frac{1}{2k}\Im(f)\Big)-
\end{equation*}
\begin{equation*}
    \Im(g) 2 k \frac{1}{2k} \Re(g)+\Re(g) 2 k \frac{1}{2k} \Im(g)-\Im(f) 2 k \frac{1}{2k} \Re(f)+\Re(f) 2 k \frac{1}{2k} \Im(f)\Big\}\Big|_{k=k_j},
\end{equation*}
which gives
\begin{equation*}
    M_{jj} = \Re(f_j)\Im(f_j') - \Im(f_j)\Re(f_j') + \Re(g_j)\Im(g_j') - \Im(g_j)\Re(g_j') - \Im(f_j') + \Im(f_j)/k_j.
\end{equation*}
We obtain similarly the relation for the diagonal of $S$.
\end{proof}
\clearpage
\section{Regularization Parameter Selection}
The LO and DA methods both have two regularization parameters that regularize the problem. These parameters are chosen to minimize the expected reconstruction error for the given noise level. We approximate the expected error by averaging the error over 100 realization of the noise. The plots corresponding to the results presented in table \ref{tabone} and figures \ref{rec_Lan}, \ref{rec_DA}, and \ref{fig:noisyresults} are shown in figure \ref{fig:reg-noise}.

\begin{figure}
\centering
\begin{tabular}{c|c}
\includegraphics[scale=.4]{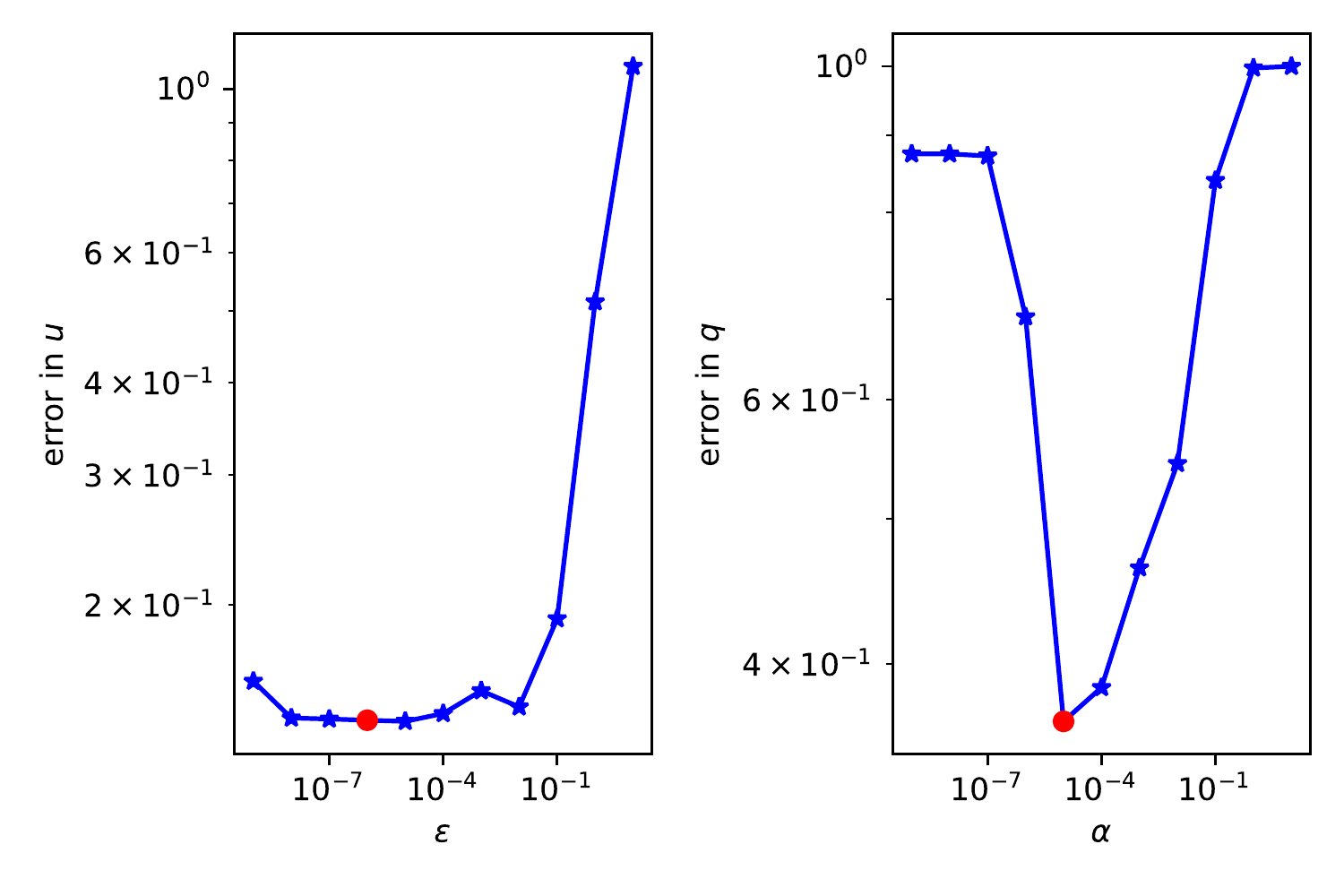} &
\includegraphics[scale=.4]{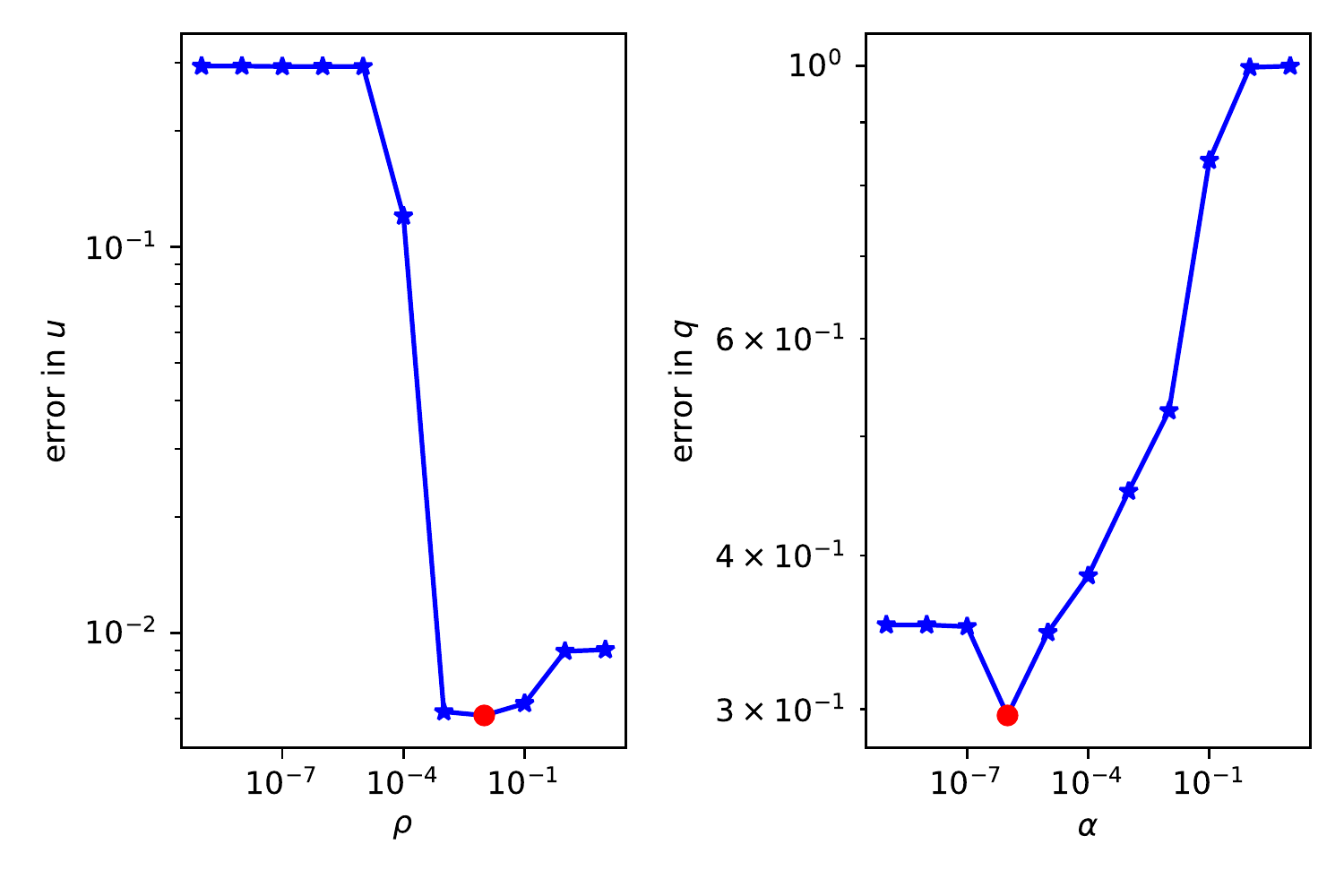} \\
\hline
\includegraphics[scale=.4]{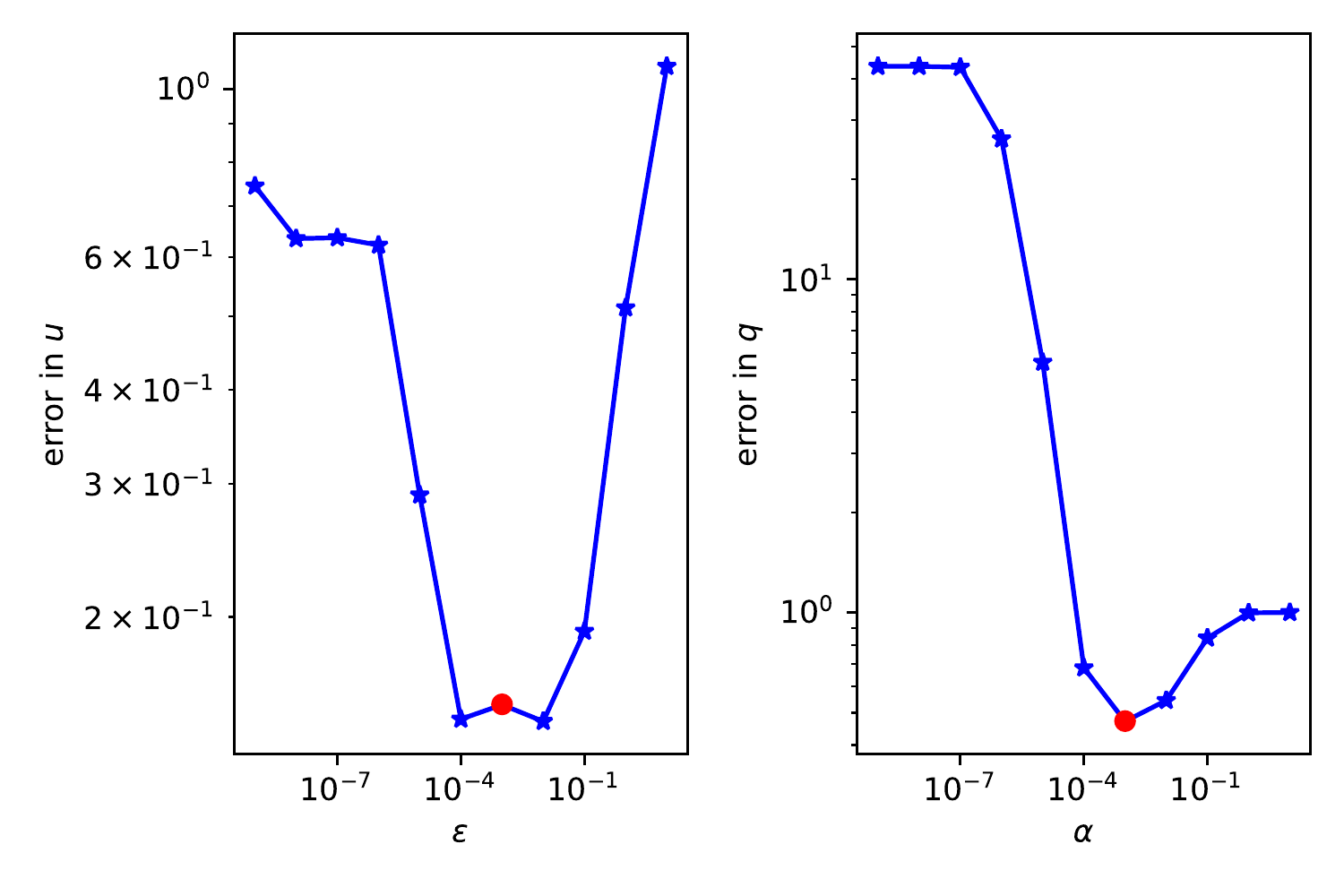} &
\includegraphics[scale=.4]{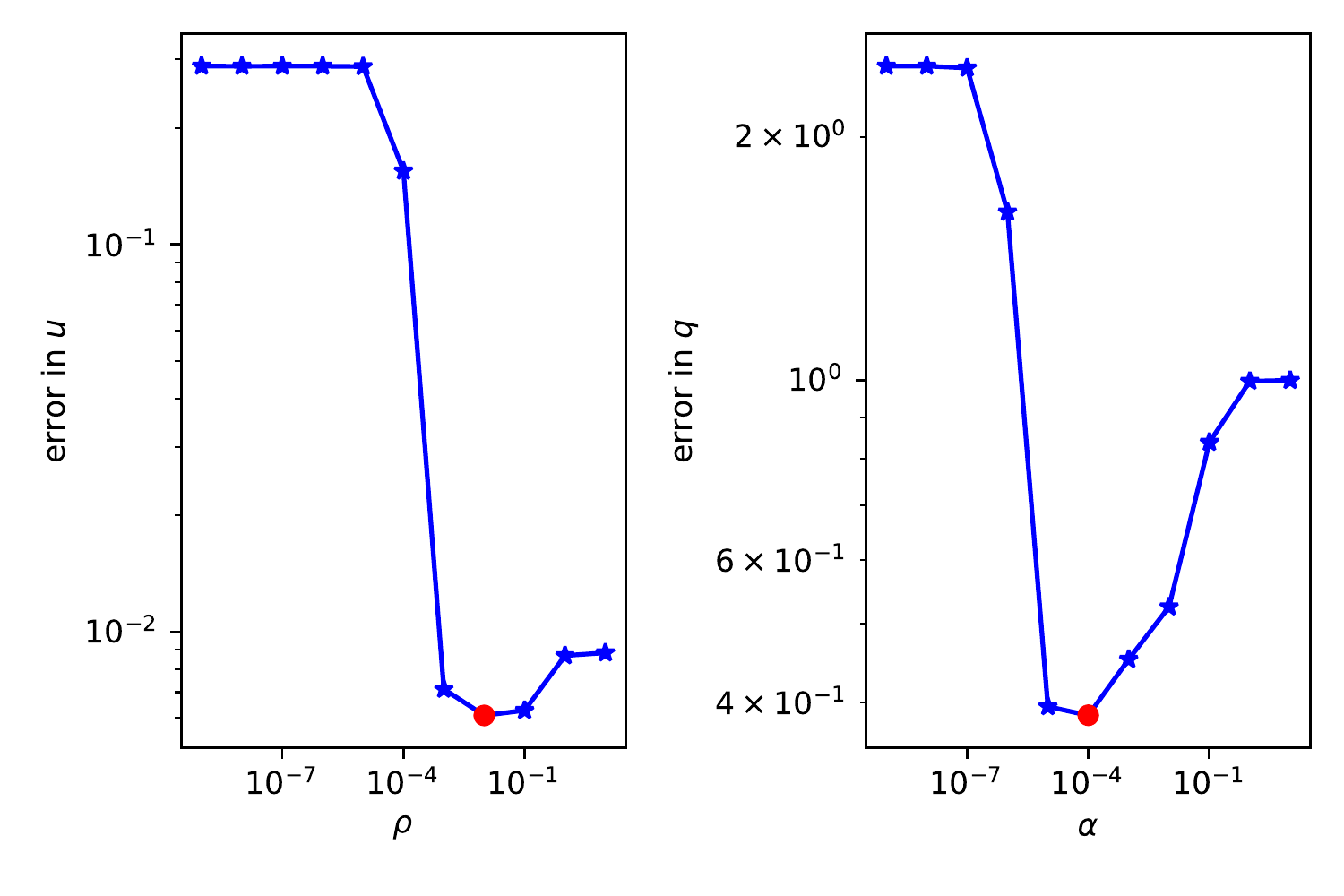} \\
\hline
\includegraphics[scale=.4]{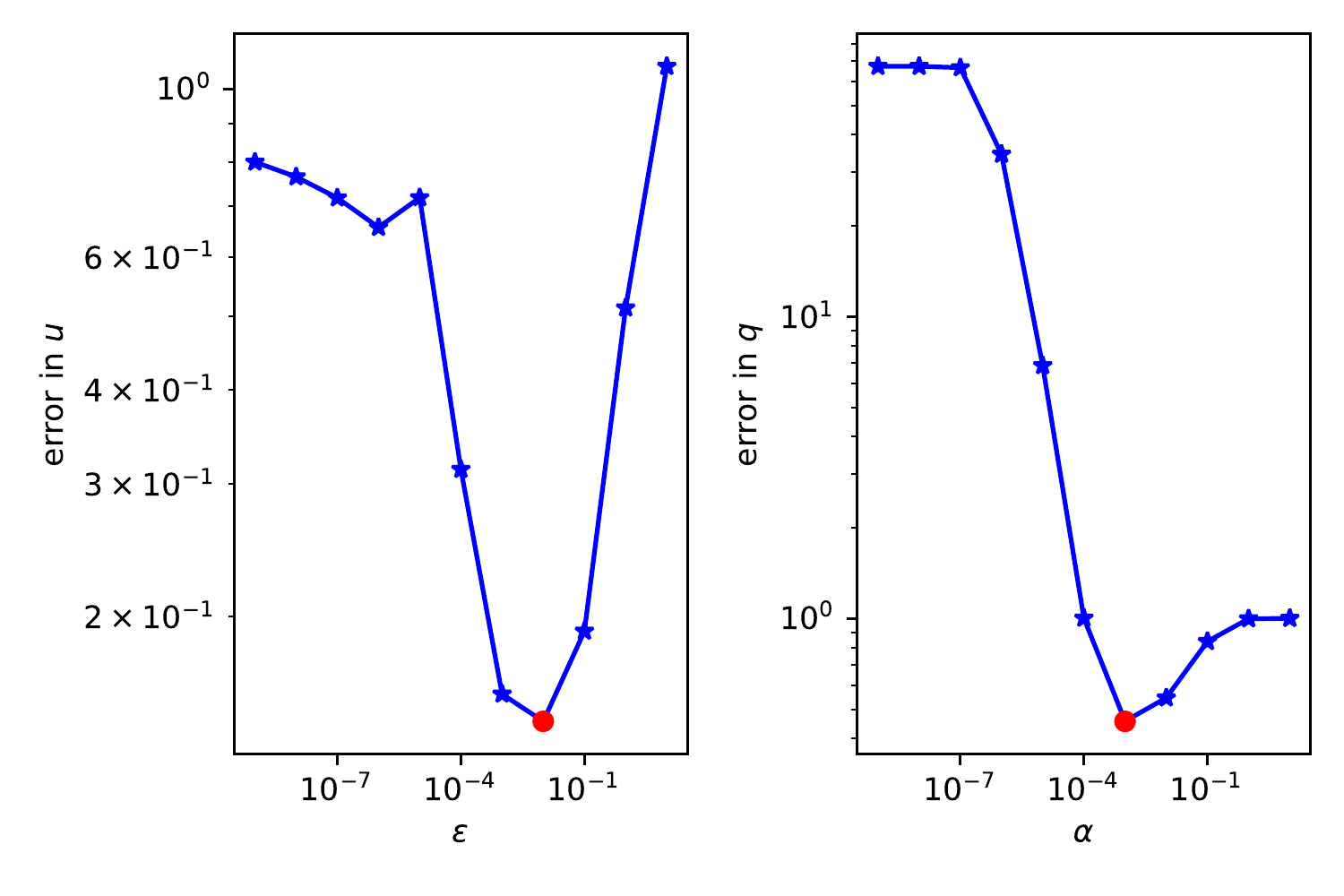} &
\includegraphics[scale=.4]{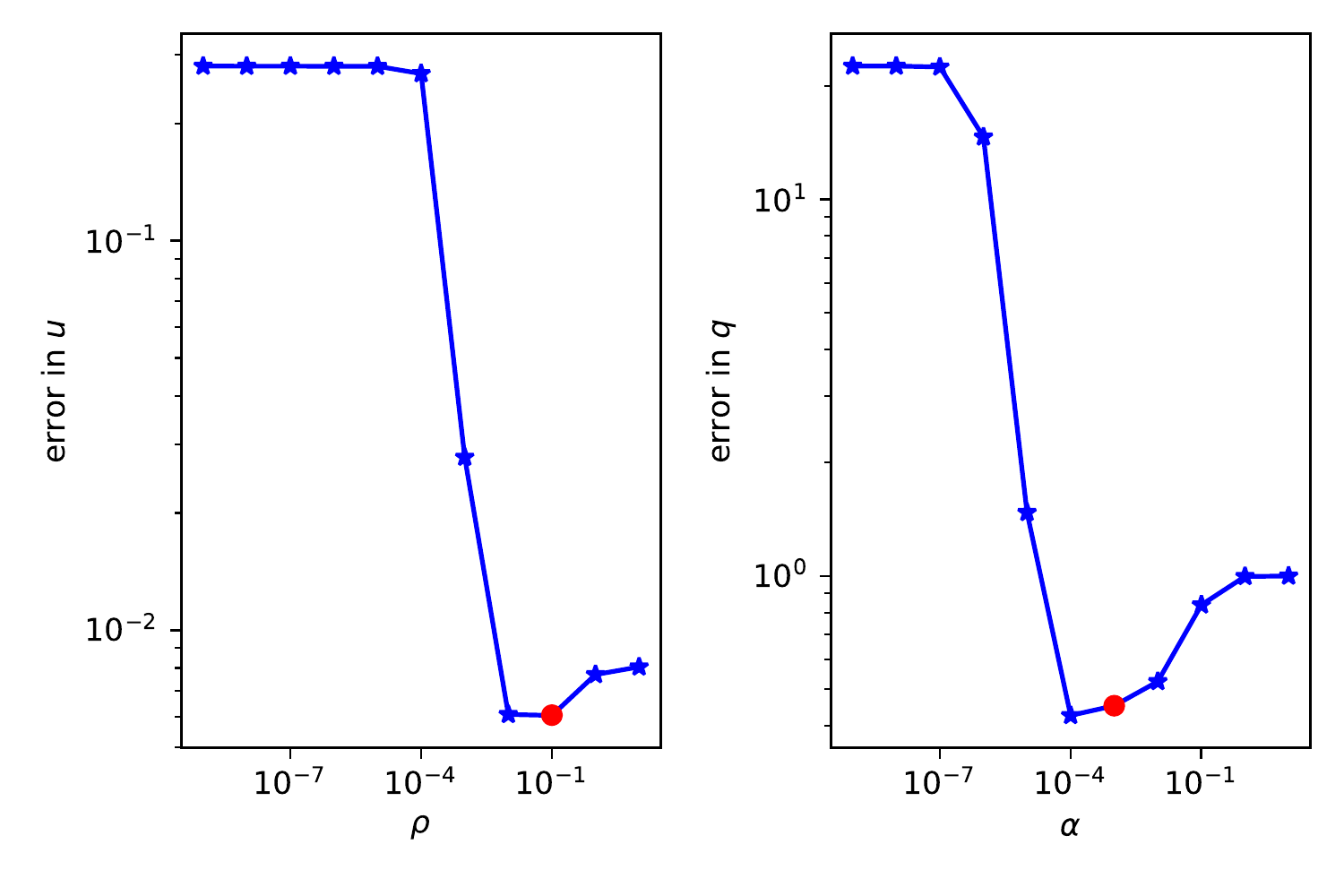} \\
\hline
\includegraphics[scale=.4]{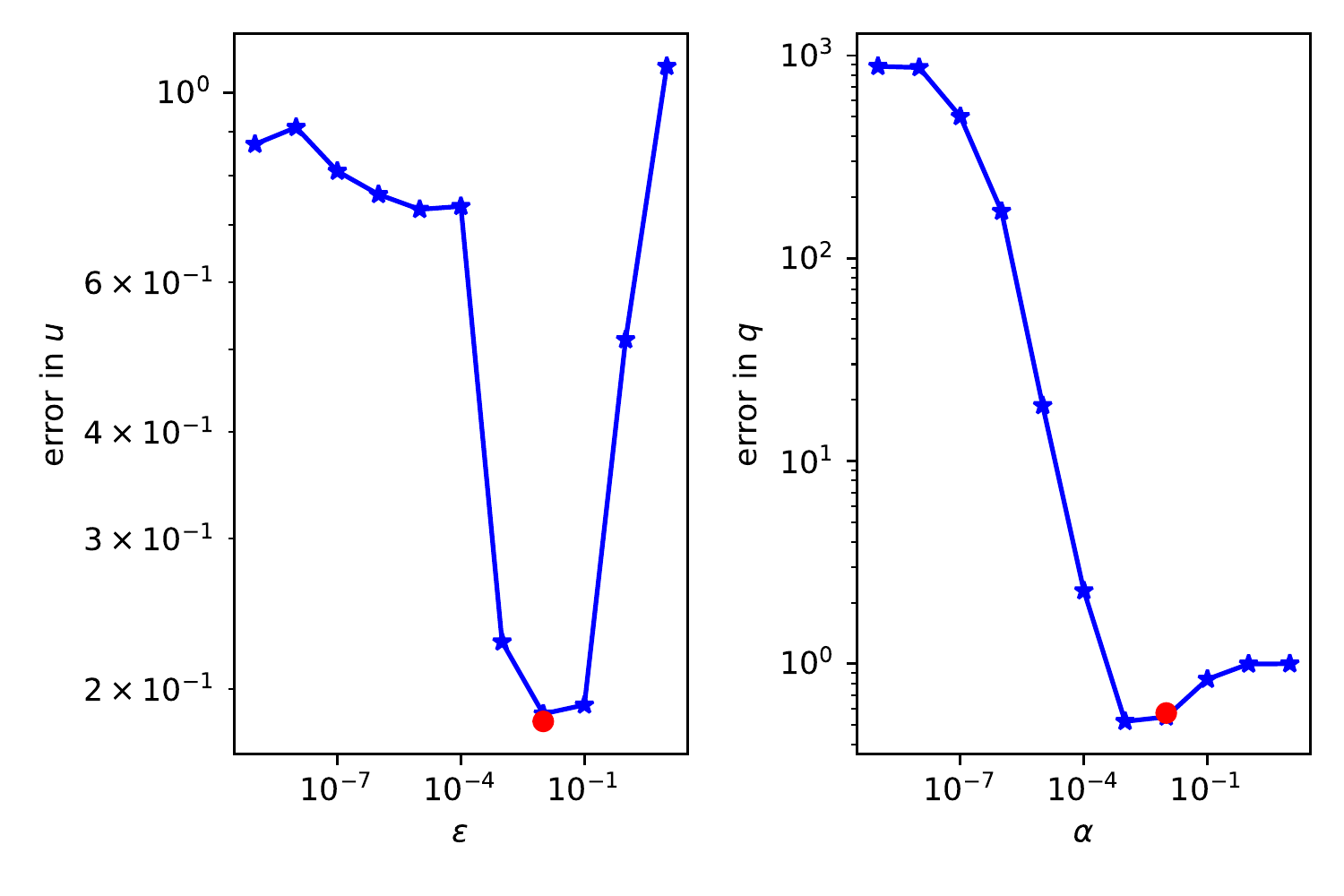} &
\includegraphics[scale=.4]{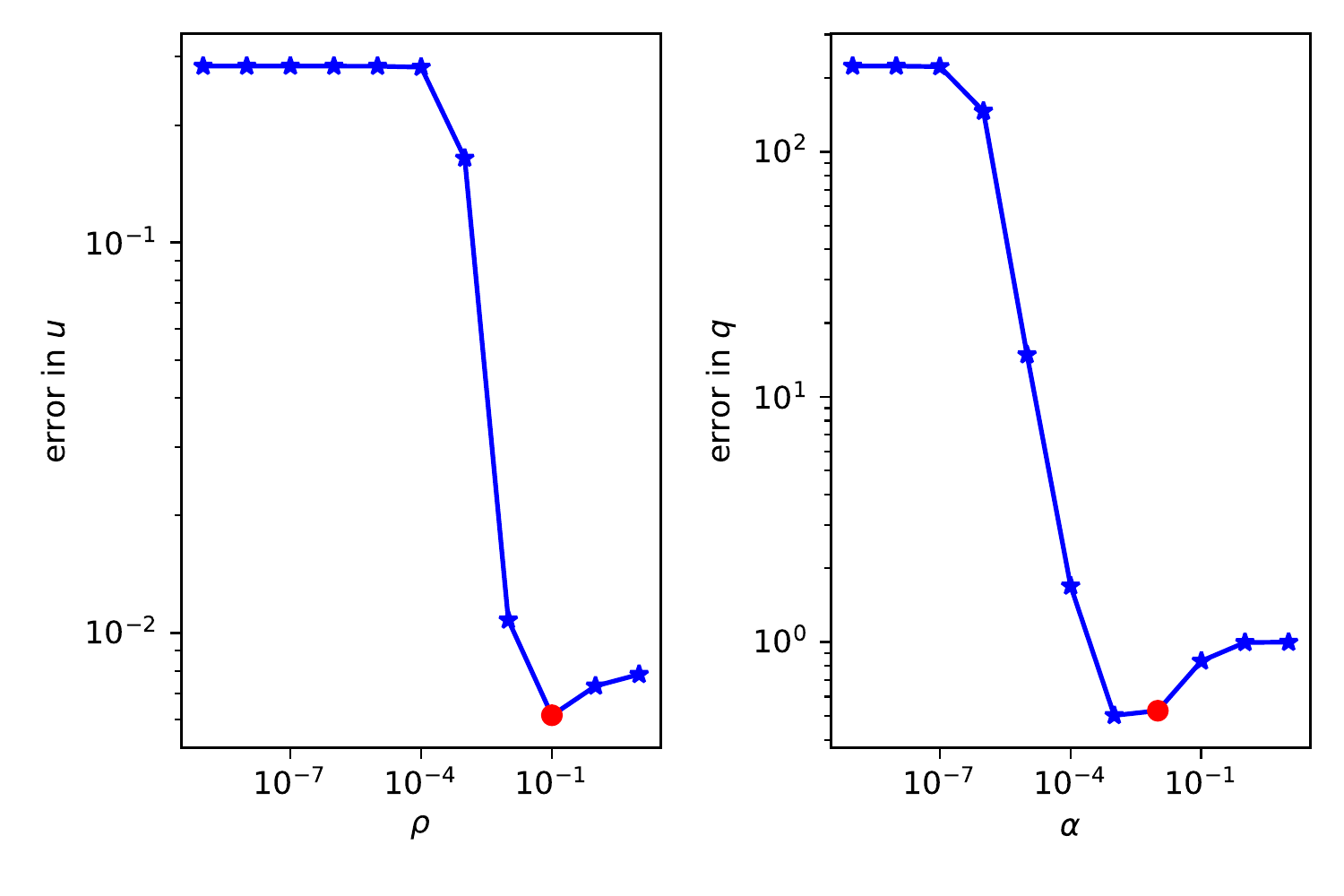} \\
\hline
\includegraphics[scale=.4]{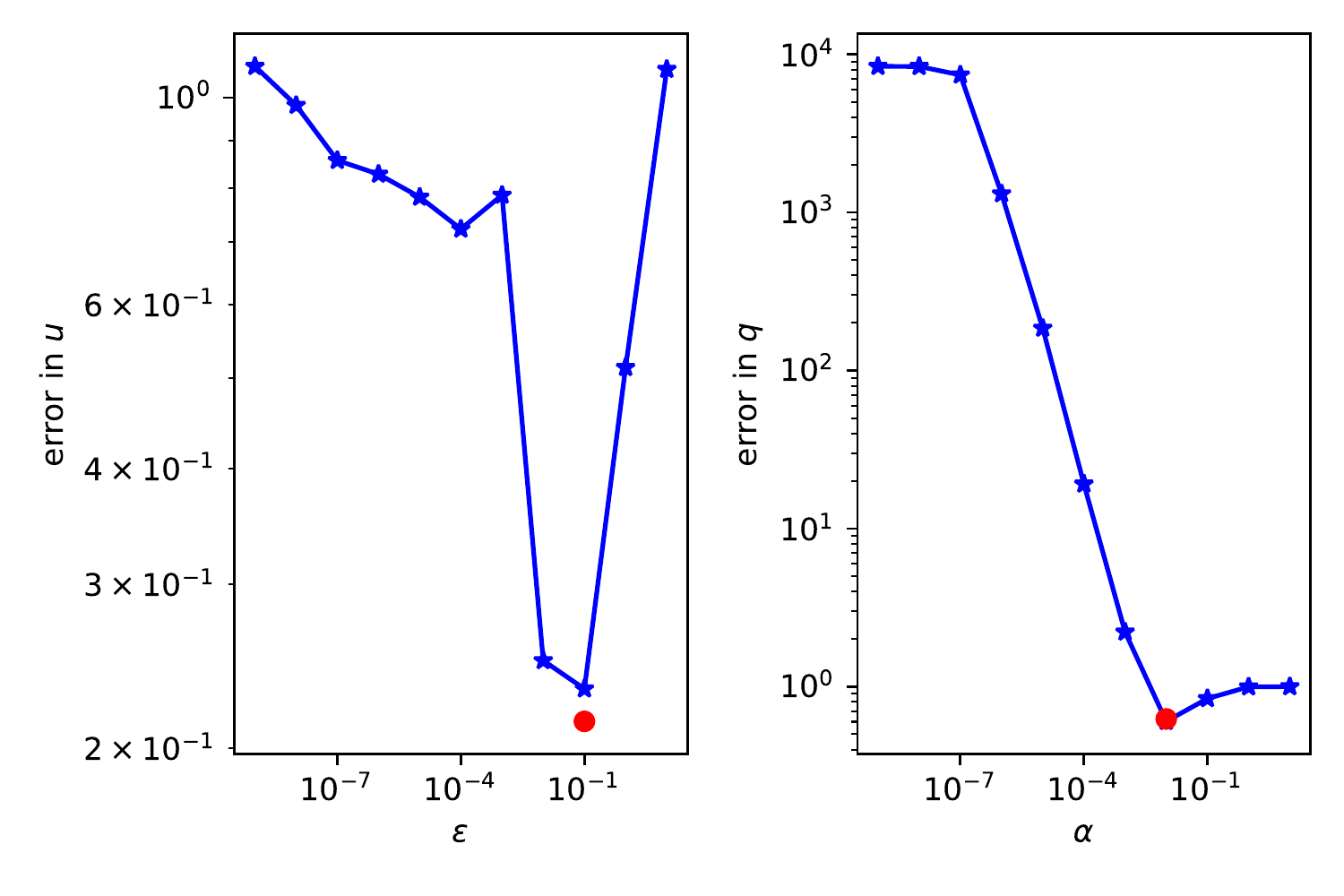} &
\includegraphics[scale=.4]{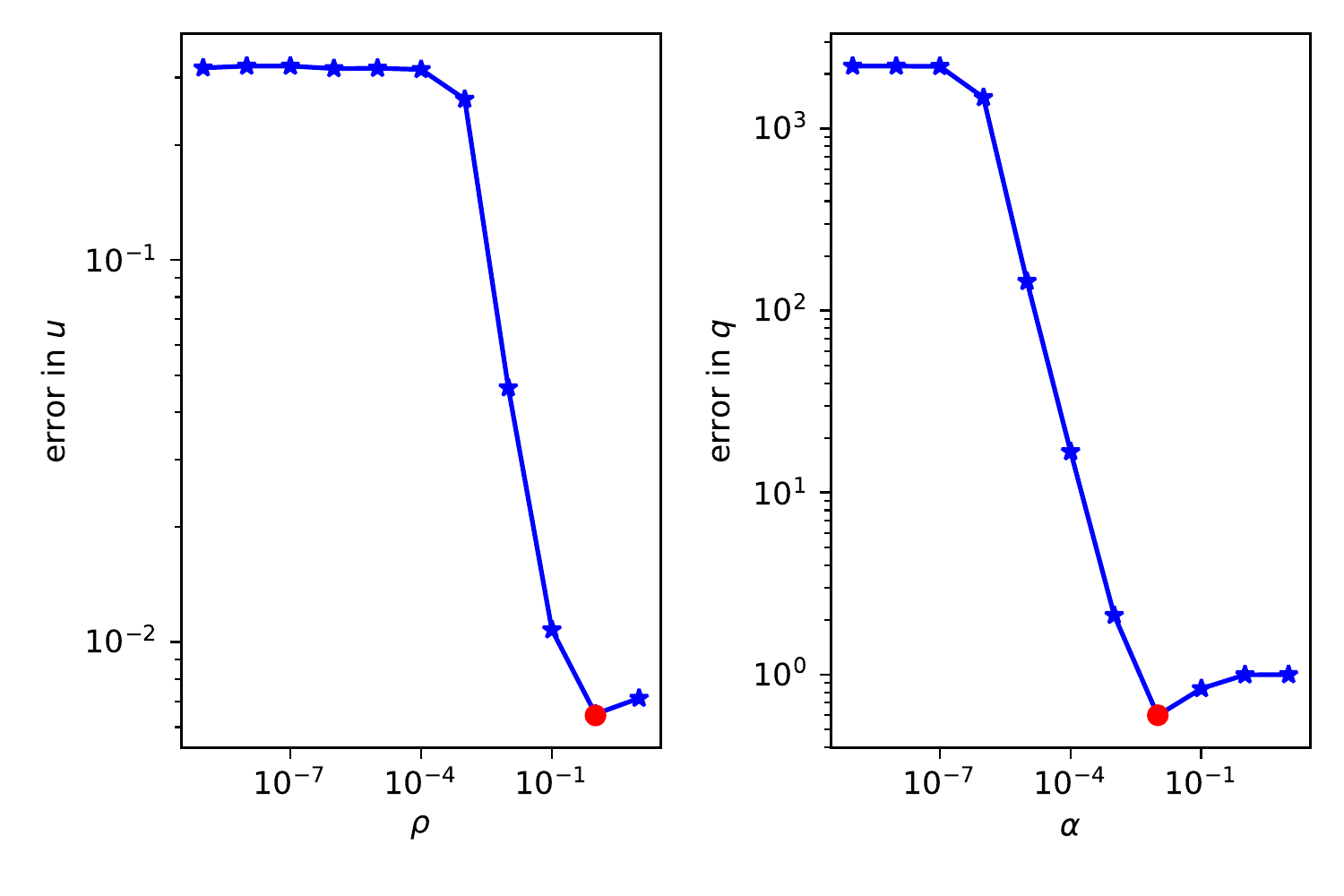} \\
\end{tabular}
\caption{Average error for both methods (LO, left and DA,right) for various noise levels ($0, 10^{-6}, 10^{-5}, 10^{-4}, 10^{-3}$ respectively from to to bottom).}
\label{fig:reg-noise}
\end{figure}
\clearpage

\end{document}